\newtheorem{theo+}              {Theorem}           [section]
\newtheorem{prop+}  [theo+]     {Proposition}
\newtheorem{coro+}  [theo+]     {Corollary}
\newtheorem{lemm+}  [theo+]     {Lemma}
\newtheorem{exam+}  [theo+]     {Example}
\newtheorem{rema+}  [theo+]     {Remark}
\newtheorem{defi+}  [theo+]     {Definition}
\def \r{\mbox{${\mathbb R}$}}
\newenvironment{theorem}{\begin{theo+}}{\end{theo+}}
\newenvironment{proposition}{\begin{prop+}}{\end{prop+}}
\newenvironment{corollary}{\begin{coro+}}{\end{coro+}}
\newenvironment{lemma}{\begin{lemm+}}{\end{lemm+}}
\theoremstyle{plain} \theoremstyle{remark}
\newtheorem{remark}{Remark}
\newtheorem{example}{Example}
\def\E{/\kern-1.0em \equiv }
\title{$f$-biharmonic Riemannian submersions from 3-manifolds}
\author{Ze-Ping Wang$^{*}$ and Li-Hua Qin }
\address{School of Mathematical Sciences,\newline\indent Guizhou
Normal University,\newline\indent Guiyang 550025,\newline\indent
People's Republic of China
\newline\indent E-mail:zpwzpw2012@126.com \;(Wang)\\\newline\indent
E-mail:1727729537@qq.com \;(Qin)}
\thanks{*Supported by the Scientific and Technological Project in Guizhou Province ( Grant no. Qiankehe Platform Talents [2018]5769-04) and  by the NSFC (No. 11861022). }
\date{2/1/2024}
\begin{document}

\title[$f$-Biharmonic submanifolds  and  $f$-biharmonic Riemannian submersions ] {$f$-Biharmonic submanifolds in space forms and  $f$-biharmonic Riemannian submersions from 3-manifolds }

\subjclass{58E20, 53C12} \keywords{Biharmonic maps, $f$-biharmonic maps, Riemannian submersions,  $f$-biharmonic curves, $f$-biharmonic submanifolds.}

\maketitle

\section*{Abstract}
\begin{quote}
{\footnotesize  $f$-Biharmonic maps are generalizations of harmonic maps and  biharmonic maps.
 In this paper, we  obtain some  descriptions of   $f$-biharmonic curves  in a space form.  We also obtain a complete classification of  proper  $f$-biharmonic isometric immersions of a developable surface in $\r^3$ by proving that  a proper $f$-biharmonic developable surface  exists only  in the case where the surface is a cylinder. Based on this, we  show that a proper biharmonic conformal immersion of a developable surface into $\r^3$ exists only  in the case when the surface is a cylinder. Riemannian submersions can be viewed as the dual notion of  isometric immersions (i.e., submanifolds). We also study $f$-biharmonicity of  Riemannian submersions from 3-space forms by using the  integrability data. Examples are given of proper $f$-biharmonic Riemannian submersions and $f$-biharmonic surfaces and curves.}
\end{quote}

\section{Introduction and Preliminaries}
All manifolds, maps, tensor fields studied in this paper are
assumed
to be smooth unless there is an otherwise statement.\\

A {\em biharmonic map} is a map $\varphi:(M, g)\longrightarrow (N,
h)$ between Riemannian manifolds that is  a critical point of the
bienergy
\begin{equation}\nonumber
E^{2}\left(\varphi,\Omega \right)= \frac{1}{2} {\int}_{\Omega}
\left|\tau(\varphi) \right|^{2}v_g
\end{equation}
for every compact subset $\Omega$ of $M$, where
$\tau(\varphi)={\rm Trace}_{g}\nabla {\rm d} \varphi$ is the
tension field of $\varphi$ vanishing of which means the map is
harmonic. By computing the first variation of the functional (see
\cite{Ji}) we see that $\varphi$ is  biharmonic  if and only if
its bitension field vanishes identically, i.e.,
\begin{equation}\label{BT1}
\tau_{2}(\varphi):={\rm
Trace}_{g}(\nabla^{\varphi}\nabla^{\varphi}-\nabla^{\varphi}_{\nabla^{M}})\tau(\varphi)
- {\rm Trace}_{g} R^{N}({\rm d}\varphi, \tau(\varphi)){\rm
d}\varphi =0,
\end{equation}
where $R^{N}$ is the curvature operator of $(N, h)$ defined by
$$R^{N}(X,Y)Z=
[\nabla^{N}_{X},\nabla^{N}_{Y}]Z-\nabla^{N}_{[X,Y]}Z.$$
{\bf $f$-biharmonic maps and their equations}: $f$-biharmonic maps are critical
points of the $f$-bienergy functional for maps $\varphi : (M, g) \to(N, h)$ between Rie-
mannian manifolds:
\begin{equation}\notag
E_{2,f} (\varphi) =\int_{\Omega}f|\tau(\varphi)|^2v_g,
\end{equation}
where $\Omega$
 is a compact domain of $M$  and $f:M\to(0,+\infty)$. The Euler-Lagrange equation gives the
$f$-biharmonic map equation ( see e.g., \cite{Lu,Ou10})
\begin{equation}\label{fbeq}
\tau_{2,f}(\varphi) =-J^{\varphi}(f\tau(\varphi))=f\tau_{2}(\varphi)+(\Delta f)\tau(\varphi)+2\nabla^{\varphi}_{{\rm grad}f}\tau(\varphi)=0,
\end{equation}
where $\tau(\varphi)$ and $\tau_2(\varphi)$ are the tension and bitension fields of $\varphi$ respectively,  $J^{\varphi}$ is the  Jacobi operator of the map defined by $J^{\varphi}(X)=-\{{\rm
Trace}_{g}(\nabla^{\varphi}\nabla^{\varphi}-\nabla^{\varphi}_{\nabla^{M}})X
- {\rm Trace}_{g} R^{N}({\rm d}\varphi, X){\rm
d}\varphi \}$.  Clearly, both harmonic maps and biharmonic maps are $f$-biharmonic.  We  also have the following inclusion relations\\

$\{Harmonic\; \;maps\} \;\subset\;\{Biharmonic\;\; maps\}$$\;\subset\;\{f$-$biharmonic\; \;maps\}.$\\

 A submanifold in a Riemannian manifold is called an $f$-biharmonic submanifold if the isometric immersion defining the submanifold is an $f$-biharmonic
map. Similarly, a Riemannian submersion is called an $f$-biharmonic
Riemannian submersion if the Riemannian submersion is  an $f$-biharmonic
map. We call an $f$-biharmonic map (respectively,\; submaniold,\; Riemannian submersion ) which is not biharmonic a proper $f$-biharmonic map (respectively,\; submanifold,\; Riemannian submersion ).\\

$f$-Biharmonic map was first introduced in \cite{Lu}. Later, $f$-biharmonic submanifolds  were studied in \cite{Ou10} where
the author  obtained  the $f$-biharmonic curve equation in a space form and also gave  a complete classification of $f$-biharmonic curves in
$\r^3$. The paper \cite{WO} shows that neither of  a circular cone  and a part of  the standard sphere $S^2$ in $\r^3$ is $f$-biharmonic for any $f$, and
a constant mean curvature surface in $\r^3$ is $f$-biharmonic if and only if it is a part of a plane or a circular cylinder. For some recent progress on $f$-biharmonic submanifolds, we refer the readers to \cite{Ou10, Ou9, Ou2} and the references therein. We will focus  to classify (or construct)  proper $f$-biharmonic submanifolds in a space form.\\

  As the dual notion of biharmonic isometric immersions (i.e., biharmonic  submanifolds),  biharmonic Riemannian submersions
 were first studied by using the integrability data of an adapted frame of the Riemannian submersion in \cite{WO} where
the  authors showed that a biharmonic Riemannian submersion from a 3-space form  into a surface  has to be harmonic and also
constructed a family of proper biharmonic Riemannian submersions from $\r^3$ provided with a
warped product metric. Following \cite{WO}, we study $f$-biharmonic  Riemannian submersions from  3-manifolds by using the integrability data.
 However, it is extremely difficult to find a proper $f$-biharmonic  Riemannian submersion
 since the function $f$ is a solution of $PDE$. We will focus  to   construct  examples of proper $f$-biharmonic  Riemannian submersions. \\

In this paper, we first derive $f$-biharmonic curve equation in a general Riemannian manifold. We then obtain some  characterizations of  $f$-biharmonic curves in a space form  by giving  the explicit functions $f$ and the explicit curvatures of the curves.  For $f$-biharmonic surfaces in $\r^3$,
we obtain  a complete classification of proper $f$-biharmonic isometric immersions of a developable surface in $\r^3$ by proving that  a proper $f$-biharmonic developable surface  exists only in the case when the surface is a cylinder. An interesting result is that a cylinder whose directrix  takes  a proper $\bar{f}$-biharmonic curve in a 2-space form $N^2(K)$ of constant Gauss curvature $K$ has to be a proper $f$-biharmonic cylinder for $f= \frac{\psi(v,K)}{c_1}\bar{f}$, where $\psi(v,K)$ given by (\ref{cy8}) and  a constant $c_1>0$.
 We then  construct  many examples of proper $f$-biharmonic cylinders in $\r^3$.  Based on this, we show that a proper biharmonic conformal immersion of a developable surface into $\r^3$ exists only in the case where the surface is a cylinder.
Finally, we  derive $f$-biharmonic  equation for  Riemannian submersions  from 3-manifolds by using the integrability data. We then use it to study $f$-biharmonic Riemannian submersions from a 3-space form. Examples are given of  proper $f$-biharmonic Riemannian submersions.

\section{ $f$-Biharmonic submanifolds in  space forms }
 In this section, we  characterize  $f$-biharmonic curves  in a space form by using the explicit functions $f$  and the explicit curvatures of the curves.
 Many examples of proper $f$-biharmonic curves  in $\r^n$ are obtained.
We also give  complete classifications of $f$-biharmonic  isometric immersions and biharmonic  conformal immersions of a developable surface into $\r^3$.
An interesting result is that a cylinder whose directrix  takes  a proper $\bar{f}$-biharmonic curve in a 2-space form $N^2(K)$ of constant Gauss curvature $K$ has to be a proper $f$-biharmonic cylinder for $f= \frac{\psi(v,K)}{c_1}\bar{f}$, where $\psi(v,K)$ given by (\ref{cy8}) and  a constant $c_1>0$.  Based on this, one constructs infinitely many examples of
proper $f$-biharmonic cylinders  and biharmonic  conformal immersions of  cylinders into $\r^3$
\subsection{$f$-Biharmonic curves in  space forms }

 A Frenet frame $\{F_{i}\}_{i=1, 2, \ldots, n}$ associated to an arclength  parametrized curve $\gamma: (a,b)\to(N^{n}, h)$ (see e.g., \cite{La}) is an orthonormal frame which can be described by
\begin{equation}\notag
\begin{cases}
F_{1}=d\gamma(\frac{\partial}{\partial s})=\gamma',\\
\nabla_{F_1}F_{1}=k_{1} F_{2},\\
\nabla_{F_1}F_{i}=-k_{i-1}
F_{i-1}+k_{i} F_{i+1},\forall\; i=2,\;3,\;\ldots,\;n-1,\\
\nabla_{F_1}F_{n}=-k_{n-1}
F_{n-1},\\
\end{cases}
\end{equation}
where the functions $\{k_{1}, k_{2}, \ldots, k_{n-1}\}$ are called
the curvatures of $\gamma$.  It is well known  (see \cite{Ou10}) that the curve is an $f$-biharmonic curve  with a function $f:(a,b)\to(0,+\infty)$ if and only if
\begin{equation}\label{fn1}
f(\nabla_{\gamma'}^{N}\nabla_{\gamma'}^{N}\nabla_{\gamma'}^{N}\gamma'-R^{N}(\gamma',\nabla_{\gamma'}^{N}\gamma')\gamma')
+2f'\nabla_{\gamma'}^{N}\nabla_{\gamma'}^{N}\gamma'+f''\nabla_{\gamma'}^{N}\gamma'=0.
\end{equation}
With respect to the Frenet frame, the equation (\ref{fn1})  can be written as
\begin{lemma}\label{PE1}
Let $\gamma: (a,b)\to(N^{n}, h) (n\geq 2)$
be a curve parametrized by arc length into a Riemannian manifold. Then $\gamma$
is $f$-biharmonic if and only if :
\begin{equation}\label{pe2}
\left\{\begin{array}{rl}
-3\kappa_1\kappa'_1-2\kappa_1^2f'/f=0,\\
\kappa_1''-\kappa_1\kappa_2^2-\kappa_1^3+\kappa_1 R^{N}(F_{1},F_{2},F_{1},F_{2})+\kappa'_1f''/f+2\kappa'_1f'/f=0,\\
2\kappa_1\kappa_2+\kappa_1\kappa'_2+\kappa_1 R^{N}(F_{1},F_{2},F_{1},F_{3})+2\kappa_1\kappa_2f'/f=0,\\
\kappa_1\kappa_2\kappa_3+\kappa_1 R^{N}(F_{1},F_{2},F_{1},F_{4})=0,\\
k_{1}R(F_{1},F_{2},F_{1},F_{j})=0,\;\;\;\;\;\;j=5,\ldots,n.\;\;
\end{array}\right.
\end{equation}
\end{lemma}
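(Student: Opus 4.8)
The statement follows from a direct computation: substitute the Frenet relations into (\ref{fn1}) and collect the coefficient of each frame field $F_j$.

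First I would compute the iterated covariant derivatives of $\gamma'=F_1$ along $\gamma'$ by repeatedly applying the product rule together with the Frenet equations. From $\nabla_{\gamma'}^{N}\gamma'=\kappa_1F_2$ and $\nabla_{F_1}F_2=-\kappa_1F_1+\kappa_2F_3$ one obtains
\begin{equation}\notag
\nabla_{\gamma'}^{N}\nabla_{\gamma'}^{N}\gamma'=-\kappa_1^2F_1+\kappa_1'F_2+\kappa_1\kappa_2F_3,
\end{equation}
and differentiating once more, now also invoking $\nabla_{F_1}F_3=-\kappa_2F_2+\kappa_3F_4$, gives
\begin{equation}\notag
\nabla_{\gamma'}^{N}\nabla_{\gamma'}^{N}\nabla_{\gamma'}^{N}\gamma'=-3\kappa_1\kappa_1'F_1+(\kappa_1''-\kappa_1^3-\kappa_1\kappa_2^2)F_2+(2\kappa_1'\kappa_2+\kappa_1\kappa_2')F_3+\kappa_1\kappa_2\kappa_3F_4.
\end{equation}
Thus the third derivative reaches only $F_4$, the second only $F_3$, and the first only $F_2$, so that $F_5,\dots,F_n$ enter (\ref{fn1}) solely through the curvature term.

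Next I would expand that curvature term. Since $\nabla_{\gamma'}^{N}\gamma'=\kappa_1F_2$, we have $R^N(\gamma',\nabla_{\gamma'}^{N}\gamma')\gamma'=\kappa_1R^N(F_1,F_2)F_1$, whose components in the orthonormal frame are $\kappa_1R^N(F_1,F_2,F_1,F_j)$, $j=1,\dots,n$ (with the sign convention for the $(0,4)$-tensor fixed throughout). The skew-symmetry of $R^N$ in its last two slots forces the $F_1$-component to vanish, which explains the absence of a curvature term in the first equation of (\ref{pe2}).

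Finally, I would insert these expressions, together with $2f'\nabla_{\gamma'}^{N}\nabla_{\gamma'}^{N}\gamma'$ and $f''\nabla_{\gamma'}^{N}\gamma'$, into (\ref{fn1}), gather the coefficient of each $F_j$, and divide through by $f>0$. The $F_1$, $F_2$, $F_3$, $F_4$ components reproduce the first four equations of (\ref{pe2}), while for $j=5,\dots,n$ only the curvature contribution remains, giving $\kappa_1R^N(F_1,F_2,F_1,F_j)=0$; conversely, the vanishing of every frame component is precisely (\ref{fn1}), which yields the equivalence. I expect no conceptual obstacle here—the argument is entirely mechanical—and the only delicate point is the bookkeeping in the third covariant derivative, namely applying the product rule to products such as $\kappa_1\kappa_2F_3$ and tracking which frame fields each Frenet relation feeds into. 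One should read (\ref{pe2}) with the convention that, for small $n$, any term or equation involving a nonexistent curvature $\kappa_i$ or frame field $F_j$ is simply omitted.
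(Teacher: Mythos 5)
Your proof is correct and essentially identical to the paper's: both compute $\nabla_{\gamma'}^{N}\gamma'$, $\nabla_{\gamma'}^{N}\nabla_{\gamma'}^{N}\gamma'$, $\nabla_{\gamma'}^{N}\nabla_{\gamma'}^{N}\nabla_{\gamma'}^{N}\gamma'$ and the curvature term via the Frenet equations, substitute into (\ref{fn1}), and compare the coefficients of $F_1,\dots,F_n$. One remark: your honest bookkeeping yields $\kappa_1 f''/f$ (not $\kappa_1'f''/f$) in the second equation and $2\kappa_1'\kappa_2$ (not $2\kappa_1\kappa_2$) in the third, which agrees with the paper's own intermediate formula (\ref{pe3}) and points to two typos in the stated system (\ref{pe2}) rather than to any gap in your argument.
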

\begin{proof}
A straightforward computation gives
\begin{equation}\label{pe3}
\begin{array}{lll}
\tau(\gamma)=\nabla_{\gamma'}^{N}{\gamma'}=\kappa_1F_2,\\
\nabla_{\gamma'}^{N}\nabla_{\gamma'}^{N}{\gamma'}=-\kappa_1^2F_1+\kappa'_1F_2+\kappa_1\kappa_2F_3,\\
\tau_2(\gamma)=\nabla_{\gamma'}^{N}\nabla_{\gamma'}^{N}\nabla_{\gamma'}^{N}{\gamma'}-R^{N}(\gamma',\nabla_{\gamma'}^{N}{\gamma'})\gamma'\\
=-3\kappa_1\kappa'_1F_1+(\kappa''_1-\kappa_1\kappa_2^2-\kappa_1^3+\kappa_1 R^{N}(F_{1},F_{2},F_{1},F_{2}))F_2\\+(2\kappa'_1\kappa_2
+\kappa_1\kappa'_2+\kappa_1 R^{N}(F_{1},F_{2},F_{1},F_{3}))F_3\\+(\kappa_1\kappa_2\kappa_3+\kappa_1 R^{N}(F_{1},F_{2},F_{1},F_{4}))F_4+
\sum\limits_{j=5}^{n}\kappa_1 R^{N}(F_{1},F_{2},F_{1},F_{j})F_j.\\
\end{array}
\end{equation}
Substituting  (\ref{pe3}) into $f$-biharmonic curve equation (\ref{fn1}) and comparing the coefficients of both sides we obtain (\ref{pe2}).
From which  the lemma follows.
\end{proof}
Applying Lemma \ref{PE1}, we have
\begin{proposition}(see \cite{Ou10})\label{fc0}
An arclength  parametrized curve $\gamma:(a,b)\to N^n(C)$  in an $n$-dimensional space form is an $f$-biharmonic curve  if and only if
one of the following cases happens:\\
(i) $\kappa_2=0, f=c_1\kappa_1^{-3/2}$ and the curvature $\kappa_1$ solves the following $ODE$
\begin{equation}\label{fc1}
3\kappa'^2_1-2\kappa_1\kappa''_1=4\kappa_1^2(\kappa_1^2-C).
\end{equation}
(ii) $\kappa_2\neq0, \;\kappa_3=0,\; \kappa_2/\kappa_1=c_3,\;f=c_1\kappa_1^{-3/2}$ and the curvature $\kappa_1$ solves the following $ODE$
\begin{equation}\label{fc2}
3\kappa'^2_1-2\kappa_1\kappa''_1=4\kappa_1^2[(1+c_3^2)\kappa_1^2-C].
\end{equation}
\end{proposition}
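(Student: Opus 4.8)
The plan is to specialize the Frenet-frame system (\ref{pe2}) of Lemma \ref{PE1} to an ambient space form $N^n(C)$ and then read off $f$, the case split, and the two ODEs in turn. The first step is to evaluate the curvature quantities occurring in (\ref{pe2}). Since $N^n(C)$ has constant sectional curvature $C$ and $\{F_i\}$ is an orthonormal Frenet frame, a direct substitution of the space-form curvature tensor into the expressions (\ref{pe3}) gives
\begin{equation}\notag
R^{N}(F_{1},F_{2},F_{1},F_{2})=C,\qquad R^{N}(F_{1},F_{2},F_{1},F_{j})=0\ \ (j\geq 3).
\end{equation}
Feeding these into (\ref{pe2}) annihilates the curvature terms of the third, fourth and fifth equations: the fifth becomes vacuous, the fourth collapses to $\kappa_1\kappa_2\kappa_3=0$, and the third loses its curvature contribution, while the second retains only the term $C\kappa_1$.

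Next I would extract $f$ from the first equation. For a proper (non-geodesic) curve we have $\tau(\gamma)=\kappa_1F_2\neq 0$, hence $\kappa_1>0$, and the first equation of (\ref{pe2}) reads $f'/f=-\tfrac{3}{2}\kappa_1'/\kappa_1$. Integrating yields $f=c_1\kappa_1^{-3/2}$ for a constant $c_1>0$, which is exactly the function $f$ asserted in both cases. Differentiating once more expresses the remaining logarithmic-derivative data through $\kappa_1$ alone; a short computation gives $f''/f=-\tfrac{3}{2}\kappa_1''/\kappa_1+\tfrac{15}{4}(\kappa_1')^2/\kappa_1^2$, which I will insert into the second equation.

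The case split is governed by the reduced fourth equation $\kappa_1\kappa_2\kappa_3=0$: since $\kappa_1>0$, either $\kappa_2=0$ (case (i)) or $\kappa_2\neq 0$ with $\kappa_3=0$ (case (ii)). In case (i) the third equation holds automatically, and substituting $\kappa_2=0$ together with $f'/f$ and $f''/f$ into the second equation, then multiplying through by $-2\kappa_1$, should reduce it precisely to $3(\kappa_1')^2-2\kappa_1\kappa_1''=4\kappa_1^2(\kappa_1^2-C)$, i.e. (\ref{fc1}). In case (ii), inserting $f'/f=-\tfrac{3}{2}\kappa_1'/\kappa_1$ into the third equation turns it into $\kappa_1\kappa_2'-\kappa_1'\kappa_2=0$, that is $(\kappa_2/\kappa_1)'=0$, so $\kappa_2=c_3\kappa_1$ for a constant $c_3$; plugging $\kappa_2=c_3\kappa_1$ into the second equation and simplifying exactly as before gives $3(\kappa_1')^2-2\kappa_1\kappa_1''=4\kappa_1^2[(1+c_3^2)\kappa_1^2-C]$, i.e. (\ref{fc2}). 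Each converse is obtained by reversing these substitutions, so the stated equivalence follows.

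The only delicate point is the algebraic reduction of the second ($F_2$-)equation: one must correctly combine the curvature term $C\kappa_1$, the term $2\kappa_1'(f'/f)$, and the term $\kappa_1(f''/f)$ arising from $f''\nabla_{\gamma'}^{N}\gamma'=f''\kappa_1F_2$, and verify that the several $(\kappa_1')^2/\kappa_1$ contributions coalesce with the correct coefficient before clearing denominators. This is pure bookkeeping; there is no conceptual obstacle, since the whole argument is driven by the vanishing of all higher curvature components of a space form.
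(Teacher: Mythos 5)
Your proposal is correct and follows exactly the route the paper intends: the paper gives no argument beyond ``Applying Lemma \ref{PE1}'' and a citation to \cite{Ou10}, and your specialization of (\ref{pe2}) to constant curvature, the extraction of $f=c_1\kappa_1^{-3/2}$ from the $F_1$-equation, the case split from $\kappa_1\kappa_2\kappa_3=0$, and the reduction of the $F_2$-equation to (\ref{fc1})--(\ref{fc2}) all check out. Note that your $F_2$-bookkeeping uses the term $\kappa_1 f''/f$ coming from $f''\nabla_{\gamma'}^{N}\gamma'=f''\kappa_1F_2$, which is the correct coefficient (the $\kappa_1'f''/f$ printed in the second line of (\ref{pe2}) is a typo, as (\ref{pe3}) and (\ref{fn1}) confirm), so your derivation lands on the stated ODEs.
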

 It is   critical   to solve  the $ODEs$ (\ref{fc1}) and (\ref{fc2}),  to describe $f$-biharmonic curves in space forms.
So we need the following  proposition.
 \begin{proposition}\label{ODE}
 For constants $A$ and $C$,  solving the following $ODE$
\begin{equation}\label{ode}
3y'^2-2yy''=4y^2(Ay^2-C),
\end{equation}
we obtain all nonconstant solutions as
\begin{equation}\label{RH0}\notag
y=\begin{cases}
\frac{1}{C_1e^{2\sqrt{-C}s}+C_2e^{-2\sqrt{-C}s}\pm \sqrt{4C_1C_2+\frac{A}{C}}},\;\;\;\;\;\;\;\;{\rm for}\; C<0,\\
\frac{4C_1}{16A+C_1^2(s+C_2)^2},\;\;\;\;\;\;\;\;\;\;\;\;\;\;\;\;\;\;\;\;\;\;\;\;\;\;\;\;\;\;{\rm for}\; C=0,\\
\frac{1}{C_1\cos(2\sqrt{C}s)+C_2\sin(2\sqrt{C}s)\pm \sqrt{C_1^2+C_2^2+\frac{A}{C}}},\;{\rm for}\; C>0,
\end{cases}
\end{equation}
where $C_1$, $C_2$ are constants and $C_1^2+C_2^2\neq0$.
\end{proposition}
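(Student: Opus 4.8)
The plan is to reduce the second-order autonomous ODE (\ref{ode}) to quadrature by a standard reduction of order, and then to linearize it via the substitution $v = 1/y$. First, on any subinterval where a nonconstant solution $y$ has $y'\neq 0$, I would regard $y'$ as a function of $y$ and set $q=(y')^2$. Since then $y''=\tfrac12\,dq/dy$, equation (\ref{ode}) turns into the linear first-order ODE
\begin{equation}\notag
y\,\frac{dq}{dy}-3q=-4Ay^4+4Cy^2 .
\end{equation}
Solving this with integrating factor $y^{-3}$ and calling the integration constant $D$ gives the first integral
\begin{equation}\notag
(y')^2=q=-4Ay^4+Dy^3-4Cy^2 .
\end{equation}

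The decisive simplification is the substitution $v=1/y$, under which $(v')^2=(y')^2/y^4$ and the first integral above collapses to
\begin{equation}\notag
(v')^2=-4Cv^2+Dv-4A .
\end{equation}
Differentiating this relation once (on intervals with $v'\neq 0$) eliminates the square and produces the linear constant-coefficient equation $v''+4Cv=D/2$, whose general solution splits into exactly the three cases according to $\mathrm{sign}(C)$: a pair of exponentials for $C<0$, a quadratic polynomial for $C=0$, and trigonometric functions for $C>0$, together in each case with the particular solution $E=D/(8C)$ (resp. the polynomial particular solution when $C=0$).

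At this point I would have three constants at my disposal ($C_1,C_2$ and $D$, equivalently $E$) for what must be a two-parameter family, so the last step is to feed the general solution of the linear equation back into the first-order constraint $(v')^2=-4Cv^2+Dv-4A$. Writing $v=W+E$ with $W$ the homogeneous part, the energy identity $(W')^2+4CW^2=\text{const}$ (obtained by multiplying $W''+4CW=0$ by $W'$) shows that the terms linear in $W$ cancel precisely because $E=D/(8C)$, while matching the constant terms forces $E^2=C_1^2+C_2^2+A/C$ when $C>0$ and $E^2=4C_1C_2+A/C$ when $C<0$. Solving these for $E$ produces the $\pm\sqrt{\,\cdot\,}$ terms, and after relabeling the constants (and reading off $D=C_1$ directly in the $C=0$ case) the three displayed formulas for $y=1/v$ follow.

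All the computations are elementary once the two substitutions are in place, so I do not anticipate a genuinely hard analytic step; the care needed is entirely in the bookkeeping. I must justify that restricting to intervals with $y'\neq 0$ omits no nonconstant solution — the first integral $(y')^2=-4Ay^4+Dy^3-4Cy^2$ is an identity between smooth functions and hence persists across the isolated zeros of $y'$ — and I must track the extra constant introduced by differentiating the first-order relation, ensuring it is removed by re-substitution rather than left free. This re-imposition of the constraint is exactly what generates the $\pm\sqrt{\,\cdot\,}$ dependence, and it is the one place where a naive constant count would be misleading.
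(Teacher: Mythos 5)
Your proof is correct and follows essentially the same route as the paper's: the paper first inverts via $y=u^{-1}$ and then applies the reduction of order $p=u'$ to reach the first integral $(u')^2=-4Cu^2+Bu-4A$, whereas you perform the reduction of order first and invert afterwards, arriving at the identical relation $(v')^2=-4Cv^2+Dv-4A$. From there both arguments linearize by differentiating, split into the three cases on $\operatorname{sign}(C)$, and re-impose the first-order constraint to fix the extra constant, which is exactly how the $\pm\sqrt{\,\cdot\,}$ terms arise in the paper as well.
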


\begin{proof}

First of all, one sees that $y=0$ is a solution of  (\ref{ode}). For $AC>0$,
it is easy to check that  (\ref{ode}) has constant solutions $y=\pm\sqrt{\frac{C}{A}}$.

From now on, we only need to consider that  $y(s)$ is a nonconstant solution of (\ref{ode}). Putting $y=u^{-1}$ and substituting this into  (\ref{ode}), we have
  \begin{equation}\label{bb1}
u'^2-2uu''=4(Cu^2-A).
\end{equation}
Setting $p=u'=\frac{du}{ds}$, then we have $p\frac{dp}{du}=u''(s)$. Denoting by $\frac{dp}{du}=p'$, then (\ref{bb1}) turns into
 \begin{equation}
(p^2)'-\frac{1}{u}p^2+4Cu-4A/u=0,
\end{equation}
 which is solved by $p^2=-4Cu^2+Bu-4A.$ This, together with $u'=p$,  implies that
\begin{equation}\label{bb2}
u'^2=-4Cu^2+Bu-4A,
\end{equation}
where $B$ is a constant. \\
We take the derivative of  both sides of (\ref{bb2}) with respect
to $s$ and simplify the resulting equation  to obtain
\begin{equation}\label{bb4}
u''=-4Cu+\frac{1}{2}B.
\end{equation}
For the case $C>0$,  we solve (\ref{bb4}) to obtain the general solution as
\begin{equation}\label{bb7}
\begin{array}{lll}
u=C_1\cos(2\sqrt{C}s)+C_2\sin(2\sqrt{C}s)+\frac{B}{8C},
\end{array}
\end{equation}
where $C_1, C_2$ are constants. Substituting (\ref{bb7}) into (\ref{bb2}) and simplifying the resulting equation we have
\begin{equation}\label{bb8}
\begin{array}{lll}
B^2=64AC+64C^2(C_1^2+C_2^2),
\end{array}
\end{equation}
which implies that
\begin{equation}\notag
\begin{array}{lll}
u=C_1\cos(2\sqrt{C}s)+C_2\sin(2\sqrt{C}s)\pm \sqrt{\frac{A}{C}+(C_1^2+C_2^2)},\\
({\rm and \;hence})\;y=\frac{1}{C_1\cos(2\sqrt{C}s)+C_2\sin(2\sqrt{C}s)\pm \sqrt{\frac{A}{C}+C_1^2+C_2^2}},
\end{array}
\end{equation}
where constant $8C\sqrt{\frac{A}{C}+C_1^2+C_2^2}=\pm B$.

In a similar way,  we solve (\ref{ode}) for  $C=0$ and $C<0$ respectively, to obtain $y=\frac{4C_1}{16A+C_1^2(s+C_2)^2}$  and
$y=\frac{1}{C_1e^{2\sqrt{-C}s}+C_2e^{-2\sqrt{-C}s}\pm\sqrt{4C_1C_2+\frac{A}{C}}} $ respectively,
where $C_1$ and  $C_2$  are constants.

Summarizing all results above we obtain the proposition.
\end{proof}
\begin{remark}\label{2}
Hereafter, $c_1>0$,\;$C$,\;$c_3$,\; $C_3>0$,\;$C_4$,\;$C_5$,\; $C_1>0$,\;$C_2>0$ and\;$4C_1C_2+\frac{1+c_3^2}{C}>0$ are assumed to be constant unless it is otherwise stated. It is convenient to introduce the following new function
\begin{equation}\label{RH0}
\chi(s,c_3,C)=\begin{cases}
\frac{1}{C_1e^{2\sqrt{-C}s}+C_2e^{-2\sqrt{-C}s}+\sqrt{4C_1C_2+\frac{1+c_3^2}{C}}},\;\;\;\;\;\;\;\;{\rm for}\; C<0,\\
\frac{4C_3}{16(1+c_3^2)+C_3^2(s+C_4)^2},\;\;\;\;\;\;\;\;\;\;\;\;\;\;\;\;\;\;\;\;\;\;\;\;\;\;\;\;\;\;{\rm for}\; C=0,\\
\frac{1}{C_4\cos(2\sqrt{C}s)+C_5\sin(2\sqrt{C}s)+ \sqrt{\frac{1+c_3^2}{C}+(C_4^2+C_5^2)}},\;{\rm for}\; C>0.
\end{cases}
\end{equation}
\end{remark}
As an application of Proposition \ref{ODE}, we now give a characterization of  a proper $f$-biharmonic curve in $ N^n(C)$.
\begin{theorem}\label{RSH}
An arclength parametrized curve $\gamma:(a,b)\to N^n(C)$   in an $n$-dimensional  space form is a proper $f$-biharmonic curve if and only if
one of the following cases happens:\\
$(i)$: $\gamma$ is a curve with $\kappa_2=0$,\;$\kappa_1=\chi(s,0,C)$ and $f=c_1\kappa_1^{-3/2}$,\; or\\
$(ii)$:  $\gamma$ is a curve with $\kappa_3=0,\; \kappa_2/\kappa_1=c_3\neq0$, $\kappa_1=\chi(s,c_3,C)$ and \;$f=c_1\kappa_1^{-3/2}$.
\end{theorem}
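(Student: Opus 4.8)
The plan is to read off Theorem \ref{RSH} as the ``proper'' refinement of Proposition \ref{fc0}: Proposition \ref{fc0} already isolates the two geometric configurations and fixes $f=c_1\kappa_1^{-3/2}$, so the only remaining work is to (a) integrate the curvature ODEs (\ref{fc1}) and (\ref{fc2}) explicitly via Proposition \ref{ODE}, and (b) pin down exactly which solutions are \emph{proper} rather than merely $f$-biharmonic. I would organize the argument so that properness is controlled by a single scalar quantity, namely whether $\kappa_1$ is constant.

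First I would invoke Proposition \ref{fc0}: an $f$-biharmonic arclength curve in $N^n(C)$ falls into case (i) ($\kappa_2=0$) or case (ii) ($\kappa_2\neq0,\ \kappa_3=0,\ \kappa_2/\kappa_1=c_3$), and in both the first equation of (\ref{pe2}), $-3\kappa_1\kappa_1'-2\kappa_1^2 f'/f=0$, integrates (for $\kappa_1\neq0$) to $f=c_1\kappa_1^{-3/2}$. The crucial consequence is that $f$ is constant \emph{if and only if} $\kappa_1$ is constant. Next I would record that (\ref{fc1}) and (\ref{fc2}) are both the master ODE (\ref{ode}): case (i) is the instance $A=1$ (equivalently $c_3=0$, so $1+c_3^2=1$) and case (ii) is $A=1+c_3^2$ with $c_3\neq0$. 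Applying Proposition \ref{ODE} with these values of $A$, and choosing the admissible constants together with the sign of the square root so that $\kappa_1>0$ (as a first curvature must be), the nonconstant solutions are precisely $\chi(s,0,C)$ and $\chi(s,c_3,C)$ of Remark \ref{2}; substituting back yields $f=c_1\kappa_1^{-3/2}$ in each case.

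It then remains to match ``proper'' with ``nonconstant $\kappa_1$''. For the forward direction, a biharmonic curve has $\tau_2(\gamma)=0$, and reading the $F_1$-component of (\ref{pe3}) forces $\kappa_1\kappa_1'=0$, so a non-geodesic biharmonic curve has constant $\kappa_1$ and hence constant $f$; thus a proper $f$-biharmonic curve must have nonconstant $\kappa_1$, which selects exactly the solutions $\chi$ and excludes the constant solutions $y=0$ and $y=\pm\sqrt{C/A}$ produced in the proof of Proposition \ref{ODE}. Conversely, for any admissible choice of constants the functions $\chi(s,0,C)$ and $\chi(s,c_3,C)$ are genuinely nonconstant, so the associated $f=c_1\chi^{-3/2}$ is nonconstant and the curve is $f$-biharmonic but not biharmonic, giving the ``if'' direction. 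This closes the equivalence in both cases.

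The main obstacle I expect is the bookkeeping in the integration step: carefully matching the generic constants and the $\pm$ branches of Proposition \ref{ODE} to the normalized constants and fixed signs built into $\chi$ in Remark \ref{2}, while guaranteeing $\kappa_1>0$ on the relevant interval and respecting the admissibility condition $4C_1C_2+\frac{1+c_3^2}{C}>0$ in the case $C<0$. Everything else is a direct substitution and the elementary observation that properness is equivalent to nonconstancy of $\kappa_1$.
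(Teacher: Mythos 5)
Your proposal is correct and follows essentially the same route as the paper: the paper's proof is the one-line observation that Theorem \ref{RSH} follows by applying Proposition \ref{fc0} together with Proposition \ref{ODE} with $A=1$ or $A=1+c_3^2$. The only thing you add is the explicit justification (via the $F_1$-component $-3\kappa_1\kappa_1'$ of $\tau_2(\gamma)$) that properness is equivalent to nonconstancy of $\kappa_1$, a step the paper leaves implicit; this is a welcome clarification but not a different argument.
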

\begin{proof}
Applying Proposition \ref{fc0} and Proposition \ref{ODE} with $A=1$ or $A=1+c_3^2$ to (\ref{ode}), we immediately obtain the theorem.
\end{proof}
\begin{remark}\label{02}
From Theorem \ref{RSH},  we can give the explicit function $f$ for any proper $f$-biharmonic curve in a  space form, in a sense, our result recovers Theorem 4.2 and 4.4 in \cite{Ou10}.
\end{remark}

By Theorem \ref{RSH}, we see that a proper $f$-biharmonic curve in $\r^3$ is either a planar curve or a general helix.

\begin{corollary}( \cite{Ou10})\label{Th3}
An arclength  parametrized curve $\gamma:(a,b)\to\r^3$  is a proper $f$-biharmonic curve  if and only if
one of the following cases happens:\\
$(i)$ $\gamma$ is  a planar curve with $\kappa_2=0$, $\kappa_1(s)= \frac{4C_1}{16+C_1^2(s+C_2)^2}$, and $f=c_1\kappa_1^{-3/2}$,\; or\\
$(ii)$ $\gamma$ is a general helix with $\kappa_1(s)= \frac{4C_1}{16(1+c_3^2)+C_1^2(s+2C_2)^2}$, $\kappa_2/\kappa_1(s)= c_3\neq0$, and $f=c_1\kappa_1^{-3/2}$.
\end{corollary}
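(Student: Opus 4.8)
The plan is to obtain this corollary as the specialization of Theorem \ref{RSH} to the flat space form $N^3(C)=\r^3$, where $C=0$ and $n=3$, combined with the standard geometric interpretation of the two curvature conditions in dimension three. No new computation is needed beyond substituting $C=0$ into the piecewise function $\chi$ from (\ref{RH0}) and reading off the resulting formulas.

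First I would set $C=0$ and $n=3$ in Theorem \ref{RSH}. In Case $(i)$ the condition $\kappa_2=0$ says the torsion of $\gamma$ vanishes identically, so $\gamma$ is a planar curve. Reading off the $C=0$ branch of $\chi$ gives
\[
\kappa_1=\chi(s,0,0)=\frac{4C_3}{16+C_3^2(s+C_4)^2},
\]
and after the harmless relabeling $C_3\to C_1$, $C_4\to C_2$ this is exactly the expression in statement $(i)$, with the accompanying weight $f=c_1\kappa_1^{-3/2}$ inherited directly from the theorem.

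Next, for Case $(ii)$, I would note that in dimension $n=3$ there is no third curvature $\kappa_3$, so the hypothesis $\kappa_3=0$ in Theorem \ref{RSH} is vacuously satisfied and only the ratio condition $\kappa_2/\kappa_1=c_3\neq0$ survives. By Lancret's theorem, a curve in $\r^3$ has constant ratio $\kappa_2/\kappa_1$ if and only if it is a general helix, which supplies the geometric description. Reading off the $C=0$ branch of $\chi(s,c_3,0)$ and relabeling $C_3\to C_1$, $C_4\to 2C_2$ yields $\kappa_1(s)=\frac{4C_1}{16(1+c_3^2)+C_1^2(s+2C_2)^2}$ together with $f=c_1\kappa_1^{-3/2}$, matching statement $(ii)$.

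Since the two cases of Theorem \ref{RSH} exhaust all proper $f$-biharmonic curves in a space form, their flat, three-dimensional specializations exhaust all proper $f$-biharmonic curves in $\r^3$, and the corollary follows. There is no genuine obstacle here; the only points deserving care are the correct evaluation of the piecewise definition of $\chi$ at $C=0$ and the identification of the curvature conditions $\kappa_2=0$ and $\kappa_2/\kappa_1=\mathrm{const}$ with planar curves and general helices, respectively.
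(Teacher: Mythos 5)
Your proposal is correct and coincides with the paper's own (implicit) derivation: the authors state Corollary \ref{Th3} as the immediate specialization of Theorem \ref{RSH} to $C=0$, $n=3$, with $\kappa_2=0$ read as planarity and constant $\kappa_2/\kappa_1\neq 0$ read as the general-helix (Lancret) condition. The only details worth noting are exactly the ones you flagged — evaluating the $C=0$ branch of $\chi$ and relabeling the integration constants — so nothing further is needed.
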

\begin{remark}\label{002}
A proper $f$-biharmonic map $\varphi: (M, g) \to(N, h)$ followed by a totally geodesic embedding $\psi: (N, h)\to(Q,k)$ is  proper $f$-biharmonic map
$\psi\circ\varphi: (M, g) \to(Q, k)$.

 In fact, using the fact in \cite{Ou8}\;(Page 371), we can easily check that $\tau(\psi\circ\varphi)=d\psi(\tau(\varphi))$,
 $\tau^2(\psi\circ\varphi)=d\psi\left(\tau^2(\varphi)\right)$ and $\nabla^{\psi\circ\varphi}_{{\rm grad} f}\tau(\psi\circ\varphi)=d\psi\left(\nabla^{\varphi}_{{\rm grad} f}\tau(\varphi)\right)$. It follows that $\tau_{2,f}(\psi\circ\varphi)=d\psi\left(\tau_{2,f}(\varphi)\right)$ and hence $\tau_{2,f}(\psi\circ\varphi)=0$ iff $\tau_{2,f}(\varphi)=0$.
\end{remark}
We know that there is no proper biharmonic curve in $\r^n$ (see e.g., \cite{Ou2}). For some results on biharmonic curves, we refer the readers to \cite{Ba, CMO,CMOP,COP,Ji2,Di,Ou2} and the references therein. At the end of this section, we will try to construct some explicit examples of
proper $f$-biharmonic curves in a space form.
\begin{proposition}\label{CS0}
(1)\; If $\rho'(s)\neq\pm1$ solves the $ODE$  \begin{equation}\label{G1}
\begin{array}{lll}
\rho''+(1-\rho'^2)\tan\rho+\frac{\sqrt{1-\rho'^2}}{C_4\cos(2s)+C_5\sin(2s)+\sqrt{1+C_4^2+C_5^2}}=0,
\end{array}
\end{equation}
then the curve $\gamma:(a,b)\to (S^2, d\rho^2+\cos^2\rho d\phi^2)$  with $\gamma(s)=(\rho(s),\; \int\frac{\sqrt{1-\rho'^2}}{\cos\rho}ds )$
followed by a totally geodesic embedding  $\psi:S^2\to S^n$ is a proper $f$-biharmonic curve $\tilde{\gamma}=\psi\circ\gamma:(a,b)\to S^n$  for $f=c_1\left(C_4\cos(2s)+C_5\sin(2s)+\sqrt{1+C_4^2+C_5^2}\right)^{3/2}$.

(2)\; If  $u'(s)\neq\pm1$ solves the $ODE$  \begin{equation}\label{G2}
\begin{array}{lll}
u''(s)-(1-u'^2)+ \frac{\sqrt{1-u'^2}}{C_1e^{2s}+C_2e^{-2s}+\sqrt{4C_1C_2-1}}=0,
\end{array}
\end{equation}
then the curve $\gamma:(a,b)\to (H^2, du^2+e^{2u} dv^2)$  with $\gamma(s)=(u(s),\int\frac{\sqrt{1-u'^2}}{e^u}ds)$ followed by a totally geodesic embedding  $\psi:H^2\to H^n$ is a proper $f$-biharmonic curve $\tilde{\gamma}=\psi\circ\gamma:(a,b)\to H^n$  for $f=c_1\left(C_1e^{2s}+C_2e^{-2s}+\sqrt{4C_1C_2-1}\right)^{3/2}$.

(3)\; If  $x'(s)\neq\pm1$ solves the $ODE$  \begin{equation}\label{G3}
\begin{array}{lll}
x''(s)+\frac{4C_3\sqrt{1-x'^2}}{16+C_3^2(s+C_4)^2}=0,
\end{array}
\end{equation}
then the planar curve $\gamma:(a,b)\to (\r^2, dx^2+ dy^2)$  with $\gamma(s)=(x(s),\int\sqrt{1-x'^2}ds)$ followed by a totally geodesic embedding  $\psi:\r^2\to \r^n$ is a proper $f$-biharmonic curve $\tilde{\gamma}=\psi\circ\gamma:(a,b)\to \r^n$  for $f=c_1\left(\frac{4C_3}{16+C_3^2(s+C_4)^2}\right)^{-3/2}$.
\end{proposition}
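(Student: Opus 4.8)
The plan is to recognize that all three curves in the statement fall under case $(i)$ of Theorem \ref{RSH} applied to the $2$-space forms $N^2(1)=S^2$, $N^2(-1)=H^2$ and $N^2(0)=\r^2$, and then to lift them to $S^n,\,H^n,\,\r^n$ by Remark \ref{002}. The essential content of each part is that the displayed $ODE$ is nothing but the equation $\kappa_1=\chi(s,0,C)$ for the geodesic curvature of $\gamma$, with $C=1,-1,0$ respectively. First I would verify that each $\gamma$ is parametrized by arc length: for part $(1)$ the metric gives $|\gamma'|^2=\rho'^2+\cos^2\rho\,\phi'^2$, so the prescribed $\phi'=\sqrt{1-\rho'^2}/\cos\rho$ forces $|\gamma'|=1$, and the analogous computations with $e^{2u}$ and with the Euclidean metric settle parts $(2)$ and $(3)$. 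I would also record that the three metrics have constant Gauss curvature $1,\,-1,\,0$, e.g.\ via $K=-(\sqrt{G})''/\sqrt{G}$ with $\sqrt{G}=\cos\rho,\;e^{u},\;1$.

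The main computation is the geodesic curvature. I would introduce the adapted orthonormal frame $e_1=\partial_\rho$, $e_2=\sec\rho\,\partial_\phi$ (and its analogues), write $\gamma'=\rho'e_1+\sqrt{1-\rho'^2}\,e_2=\cos\theta\,e_1+\sin\theta\,e_2$, and use $\kappa_1=\theta'+\omega_{12}(\gamma')$, where $\omega_{12}$ is the connection form determined by $\nabla_X e_1=\omega_{12}(X)e_2$. From the Christoffel symbols one gets $\omega_{12}(e_1)=0$ and $\omega_{12}(e_2)=-\tan\rho$ (respectively $+1$ on $H^2$ and $0$ on $\r^2$), while $\theta'=-\rho''/\sqrt{1-\rho'^2}$. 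This yields $\kappa_1=-\rho''/\sqrt{1-\rho'^2}-\sqrt{1-\rho'^2}\,\tan\rho$ for part $(1)$, and the analogues $-u''/\sqrt{1-u'^2}+\sqrt{1-u'^2}$ and $-x''/\sqrt{1-x'^2}$ for parts $(2)$ and $(3)$. Dividing each $ODE$ (\ref{G1}), (\ref{G2}), (\ref{G3}) through by $\sqrt{1-\rho'^2}$, $\sqrt{1-u'^2}$, $\sqrt{1-x'^2}$ then shows it is exactly equivalent to $\kappa_1=\chi(s,0,C)$ for the correct $C$; here the hypotheses $\rho'\neq\pm1$, $u'\neq\pm1$, $x'\neq\pm1$ guarantee $\sqrt{1-\rho'^2}\neq0$ etc., so $\gamma$ is nondegenerate, $\kappa_1>0$, and the division is legitimate.

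With $\kappa_1=\chi(s,0,C)$ established and $\kappa_2=0$ automatic on a surface, Theorem \ref{RSH}$(i)$ applies verbatim: $\gamma$ is a proper $f$-biharmonic curve in the $2$-space form for $f=c_1\kappa_1^{-3/2}$, and a direct substitution confirms that this $f$ coincides with the function displayed in each part (for instance $\kappa_1^{-3/2}=(C_4\cos 2s+C_5\sin 2s+\sqrt{1+C_4^2+C_5^2})^{3/2}$ in part $(1)$). Finally Remark \ref{002} says that post-composing a proper $f$-biharmonic map with a totally geodesic embedding preserves proper $f$-biharmonicity, so each $\tilde{\gamma}=\psi\circ\gamma$ is a proper $f$-biharmonic curve in the ambient space form $S^n$, $H^n$, or $\r^n$.

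The only place demanding real care is the geodesic-curvature computation together with the sign bookkeeping that converts (\ref{G1})--(\ref{G3}) into $\kappa_1=\chi$: one must pin down the connection form and the convention for the signed curvature so that the right-hand sides come out positive and match $\chi(s,0,C)>0$. Everything after that is either routine verification of the arc-length normalization or a direct appeal to Theorem \ref{RSH} and Remark \ref{002}.
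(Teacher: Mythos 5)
Your proposal is correct and takes essentially the same route as the paper: the paper likewise reduces each part to the statement $\kappa_1=\chi(s,0,C)$ via Liouville's formula $\kappa_1=\theta'-\sin\theta\tan\rho$ (your connection-form computation of $\omega_{12}$ is the same formula), checks $\rho'\neq\pm1$ so that $\sin\theta\neq0$, and then invokes Theorem \ref{RSH}(i) together with Remark \ref{002} for the totally geodesic embedding. The only difference is presentational — the paper runs the computation from $\kappa_1=\chi$ to the ODE rather than the converse — and your extra checks (arc-length normalization, constant curvature of the model metrics, sign of $\kappa_1$) are consistent with what the paper leaves implicit.
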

\begin{proof}
Let $\gamma:(a,b)\to (S^2, d\rho^2+\cos^2\rho d\phi^2)$, $\gamma(s)=(\rho(s),\;\phi(s))$ be a proper $f$-biharmonic curve  parametrized by arc length. It follows from the Statement (i) of Theorem \ref{RSH}
that the curve $\gamma$ has the  geodesic curvature as
\begin{equation}\label{sno1}\notag
\begin{array}{lll}
\kappa_1(s)=\chi(s,0,1)=\frac{1}{C_4\cos(2s)+C_5\sin(2s)+\sqrt{1+C_4^2+C_5^2}}.
\end{array}
\end{equation}
Suppose that $\theta$ is the angle between the curve $\gamma$ and $\rho$-curves.  Therefore,
applying the Liouville's formula for the geodesic curvature of curves on the surface $S^2$  in  $\r^3$ yields
\begin{equation}\label{sno6}
\begin{array}{lll}
\theta'(s)-\sin\theta\tan\rho(s)=\kappa_1(s).
\end{array}
\end{equation}
Since $\rho'=\cos\theta$ and $\phi'\cos\rho=\sin\theta$,  then (\ref{sno6}) implies that
 \begin{equation}\notag
\begin{array}{lll}
\rho''+(1-\rho'^2)\tan\rho+\kappa_1\sqrt{1-\rho'^2}=0,
\end{array}
\end{equation}
and hence $\phi(s)=\int\frac{\sqrt{1-\rho'^2}}{\cos\rho}ds$.  Note that $\kappa_1(s)\neq0$, i.e., $\sin\theta\neq0$ and hence $\cos\theta\neq\pm1$, then $\rho'\neq\pm 1$. Hence, using  Statement (i) of Theorem \ref{RSH} and Remark \ref{002}, we obtain the statement (1).

 Similar to  the statement (1), we can get  the statement (2) and (3).

 Summarizing all the above results, we obtain the proposition.
\end{proof}

 We can apply Statement (3) of Proposition \ref{CS0} to construct infinitely many examples of  proper $f$-biharmonic curves  in $\r^n$.
\begin{example}\label{cf1}
 A family of planar curves $\tilde{\gamma}(s)=(\frac{4\ln|\sqrt{16+C_3^2s^2}+C_3s|}{C_3}-\frac{4\ln C_3}{C_3},\;\frac{\sqrt{16+C_3^2s^2}}{C_3},\\0,\ldots,\;0)$\; in $\r^n$\; are proper $f$-biharmonic curves\;for $f=c_1(\frac{4C_3}{16+C_3^2s^2})^{-3/2}$,\;where $c_1$ and \;$C_3$ are positive constants.
Moreover, the curvature $\kappa_1=\frac{4C_3}{16+C_3^2s^2}$ and the torsion $\kappa_2=0$.  The  example recovers the family
of proper $f$-biharmonic curves found in \cite{Ou10}.

In fact, we solve (\ref{G3}) to obtain
 the general solutions as
\begin{equation}\label{GG3}
\begin{array}{lll}
x=-\frac{\cos(4C_3a_2)\sqrt{16+C_3^2(s^2+2C_4s+C_4^2)}}{C_3}-\frac{4\sin(4C_3a_2)\ln|C_3s+C_3C_4+\sqrt{16+C_3^2(s^2+2C_4s+C_4^2)}|}{C_3}+a_1,
\end{array}
\end{equation}
where $a_1$ and $a_2$ are constants.  By taking $C_3>0$,\;$4C_3a_2=-\pi/2$, $a_1=-\frac{4\ln C_3}{C_3}$ and $C_4=0$, one concludes that $x(s)=\frac{4\ln|\sqrt{16+C_3^2s^2}+C_3s|}{C_3}-\frac{4\ln C_3}{C_3}$  and hence $y(s)=\int\sqrt{1-x'^2}ds=\frac{\sqrt{16+C_3^2s^2}}{C_3}$.
Clearly, the curvature $\kappa_1=\frac{4C_3}{\sqrt{16+C_3^2s^2}}$ and the torsion $\kappa_2=0$ in this case.  Therefore, we apply  Statement (3) of Proposition \ref{CS0} to obtain the example.
\end{example}

We will give a family of  proper $f$-biharmonic general helixes  in $\r^3$.
\begin{proposition}\label{Hl}
If $x'(s)\neq\pm\sin\omega$ solves the $ODE$
\begin{equation}\label{hl}
\begin{array}{lll}
x''(s)+\frac{\sqrt{\sin^2\omega-x'^2}}{\sin\omega}\frac{4C_3}{16(1+c_3^2)+C_3^2(s+C_4)^2}=0,
\end{array}
\end{equation}
where $\sin\omega\cos\omega\neq0$ and $c_3=\frac{\cos\omega}{\sin\omega}$ are constants, then a general helix $\gamma:(a,b)\to\r^3$  with $\gamma(s)=(x(s),\int\sqrt{\sin^2\omega-x'^2}ds ,s\cos\omega)$ followed by a totally geodesic embedding  $\psi:\r^3\to \r^n$ is a proper $f$-biharmonic curve $\tilde{\gamma}=\psi\circ\gamma:(a,b)\to \r^n$  for $f=c_1(\frac{4C_3}{16(1+c_3^2)+C_3^2(s+C_4)^2})^{-3/2}$.
\end{proposition}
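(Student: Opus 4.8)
The plan is to follow the template used for Proposition~\ref{CS0}: first verify that $\gamma$ carries exactly the geometric data required by case~(ii) of Theorem~\ref{RSH} with $C=0$, and then transplant the conclusion to $\r^n$ through Remark~\ref{002}. First I would confirm that $\gamma$ is unit speed. Differentiating gives $\gamma'(s)=(x'(s),\sqrt{\sin^2\omega-x'^2},\cos\omega)$, whence $|\gamma'|^2=x'^2+(\sin^2\omega-x'^2)+\cos^2\omega=1$; this both establishes the arc-length parametrization and shows that $\langle\gamma',(0,0,1)\rangle=\cos\omega$ is constant. Thus $\gamma$ is a general helix whose tangent makes the fixed angle $\omega$ with the $z$-axis.

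From the constant-angle property I would read off the torsion data. By the classical characterization of general helices (Lancret's theorem), a constant angle $\omega$ between the tangent and a fixed axis is equivalent to $\kappa_2/\kappa_1=\cot\omega=\frac{\cos\omega}{\sin\omega}=c_3$, which is a nonzero constant since $\sin\omega\cos\omega\neq0$; moreover in $\r^3$ one has $\kappa_3=0$ automatically. Next I would compute $\kappa_1=|\gamma''|$. Differentiating once more, using $\frac{d}{ds}\sqrt{\sin^2\omega-x'^2}=-\frac{x'x''}{\sqrt{\sin^2\omega-x'^2}}$, gives $|\gamma''|^2=x''^2\,\frac{\sin^2\omega}{\sin^2\omega-x'^2}$, so $\kappa_1=\frac{\sin\omega}{\sqrt{\sin^2\omega-x'^2}}\,|x''|$. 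Substituting the defining ODE~(\ref{hl}) for $x''$ cancels the radical and yields precisely $\kappa_1=\frac{4C_3}{16(1+c_3^2)+C_3^2(s+C_4)^2}=\chi(s,c_3,0)$, the curvature prescribed in case~(ii) of Theorem~\ref{RSH} for $C=0$.

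With $\kappa_3=0$, $\kappa_2/\kappa_1=c_3\neq0$ and $\kappa_1=\chi(s,c_3,0)$ all verified, Theorem~\ref{RSH}(ii) (with $N^n(C)=\r^3$, i.e. $C=0$) shows that $\gamma$ is a proper $f$-biharmonic curve in $\r^3$ for $f=c_1\kappa_1^{-3/2}=c_1\bigl(\frac{4C_3}{16(1+c_3^2)+C_3^2(s+C_4)^2}\bigr)^{-3/2}$, which is exactly the stated $f$. Finally, since $\psi:\r^3\to\r^n$ is totally geodesic, Remark~\ref{002} guarantees that the composition $\tilde\gamma=\psi\circ\gamma$ is again proper $f$-biharmonic for the same $f$, completing the argument.

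The step I expect to require the most care is the identification of the torsion. Rather than compute $\kappa_2$ directly through the full Frenet apparatus (which is cumbersome because of the nested radical), I would lean on the constant-angle characterization to obtain $\kappa_2/\kappa_1=c_3$ cleanly. In the curvature computation I would also keep track of the signs of $\sin\omega$ and of the radical $\sqrt{\sin^2\omega-x'^2}$ (taking them positive on the relevant interval, consistent with $x'\neq\pm\sin\omega$) so that the cancellation producing $\kappa_1=\chi(s,c_3,0)$ is valid and $\kappa_1>0$ throughout.
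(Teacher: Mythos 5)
Your argument is correct and follows essentially the same route as the paper: verify unit speed, identify $\kappa_1=\chi(s,c_3,0)=\frac{4C_3}{16(1+c_3^2)+C_3^2(s+C_4)^2}$ and $\kappa_2/\kappa_1=c_3$, invoke the classification (Theorem \ref{RSH}(ii), equivalently Corollary \ref{Th3}(ii)), and finish with Remark \ref{002}. The only real difference is cosmetic: you read off the torsion ratio from the quantitative Lancret characterization of general helices, whereas the paper sets $x'=\sin\omega\cos\theta$, $y'=\sin\omega\sin\theta$ and computes $\kappa_2$ via the scalar triple product $(\gamma',\gamma'',\gamma''')/\kappa_1^2$; both yield the same data, and your write-up has the minor virtue of running in the sufficiency direction that the statement actually asserts.
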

\begin{proof}
Let a general helix $\gamma:(a,b)\to\r^3$  with $\gamma(s)=(x(s),y(s),s\cos\omega)$  be a proper $f$-biharmonic curve parametrized by arclength, where constant $\cos\omega\neq0$. By Corollary \ref{Th3}, the helix has the curvature $\kappa_1(s)= \frac{4C_1}{16(1+c_3^2)+C_1^2(s+2C_2)^2}\neq0$ and the torsion $\kappa_2=c_3\kappa_1(s)\neq0$. Since $\gamma'(s)=(x'(s),y'(s),\cos\omega)$ and $|\gamma'(s)|^2=x'^2(s)+y'^2+\cos^2\omega=1$, we may assume that
 $x'(s)=\sin\omega\cos\theta(s),\;y'=\sin\omega\sin\theta$. Therefore, we have
\begin{equation}\label{hl1}
\begin{array}{lll}
\gamma'=(\sin\omega\cos\theta,\sin\omega\sin\theta,\cos\omega),\;
\gamma''=\sin\omega\theta'(-\sin\theta,\cos\theta,0),\\
\gamma'''=\sin\omega\theta''(-\sin\theta,\cos\theta,0)-\sin\omega\theta'^2(\cos\theta,\sin\theta,0).
\end{array}
\end{equation}
It follows that
\begin{equation}\label{hl2}
\begin{array}{lll}
\kappa_1(s)=\sin\omega\theta',\;\kappa_2=\frac{(\gamma',\gamma'',\gamma''')}{\kappa_1^2}=\cos\omega\theta',\;
{\rm and\; hence}\; c_3=\frac{\cos\omega}{\sin\omega}.
\end{array}
\end{equation}
 Combining these, together with  $\kappa_1(s)\neq0$ and $\kappa_2\neq0$, we have $\sin\omega\cos\omega\neq0$ and $x'\neq\pm\sin\omega$.
A direct computation gives $x''(s)=-\sin\omega\theta'\sin\theta=-\theta'y'=-\theta'\sqrt{\sin^2\omega-x'^2}$. Substituting this into the 1st equation of (\ref{hl2})
we obtain (\ref{hl}). Clearly, $y=\int\sqrt{\sin^2\omega-x'^2}ds$. From these and using Remark \ref{002}, the proposition follows.
\end{proof}

We will look for some special solutions of (\ref{hl}).
\begin{example}\label{cf2}
 A family of  helixes $\gamma(s)=(\frac{4\ln|\sqrt{64+2C_3^2s^2}+\sqrt{2}C_3s|}{C_3}-\frac{4\ln C_3}{C_3},\;\frac{\sqrt{64+2C_3^2s^2}}{C_3},\frac{\sqrt{2}}{2}s)$\; in $\r^3$\; with the curvature $\kappa_1=\frac{4C_3}{32+C_3^2s^2}$ and the torsion $\kappa_2=\frac{4C_3}{32+C_3^2s^2}$ are proper $f$-biharmonic curves\;for $f=c_1(\frac{4C_3}{32+C_3^2s^2})^{-3/2}$,\;where $c_1$ and \;$C_3$ are positive constants.
Moreover, a family of curves $\bar{\gamma}(s)=(\frac{4\ln|\sqrt{64+2C_3^2s^2}+\sqrt{2}C_3s|}{C_3}-\frac{4\ln C_3}{C_3},\;\frac{\sqrt{64+2C_3^2s^2}}{C_3},\\ \frac{\sqrt{2}}{2}s,\;0,\ldots,0)$\; in $\r^n$\; are also proper $f$-biharmonic curves\;for $f=c_1(\frac{4C_3}{32+C_3^2s^2})^{-3/2}$.

In fact, by  taking $\sin\omega=\cos\omega=\frac{\sqrt{2}}{2}$,\;$c_3=1$  and $C_4=0$, (\ref{hl}) becomes
\begin{equation}\label{hll}
\begin{array}{lll}
x''(s)+\frac{\sqrt{\frac{1}{2}-x'^2}}{\frac{\sqrt{2}}{2}}\frac{4C_3}{32+C_3^2s^2}=0,
\end{array}
\end{equation}
which is solved by
\begin{equation}\label{G3}\notag
\begin{array}{lll}
x=-\frac{\cos(4\sqrt{2}C_3a_2)\sqrt{64+2C_3^2s^2}}{2C_3}-\frac{4\sin(4\sqrt{2}C_3a_2)\ln|\sqrt{2}C_3s+\sqrt{64+2C_3^2s^2}|}{C_3}+a_1,
\end{array}
\end{equation}
where $a_1$ and $a_2$ are constants. Note that $y(s)=\int \sqrt{1-x'^2}ds$. Furthermore,
 taking $C_3>0$,\;$4\sqrt{2}C_3a_2=-\pi/2$ and $a_1=-\frac{4\ln C_3}{C_3}$, one finds that $x(s)=\frac{4\ln|\sqrt{64+2C_3^2s^2}+\sqrt{2}C_3s|}{C_3}-\frac{4\ln C_3}{C_3}$  and hence $y(s)=\int\sqrt{1-x'^2}ds=\frac{\sqrt{64+2C_3^2s^2}}{C_3}$.
Clearly, the curvature $\kappa_1=\frac{4C_3}{\sqrt{32+C_3^2s^2}}$ and the torsion $\kappa_2=\frac{4C_3}{\sqrt{32+C_3^2s^2}}$ in this case.  Thus, we apply  Proposition \ref{Hl} to get the example.
\end{example}

\subsection{ $f$-Biharmonic  isometric immersions and biharmonic  conformal immersions of a developable surface into $\r^3$ }
The  equation for $f$-biharmonic hypersurfaces in
a space form can be stated as
\begin{lemma}$($ see e.g.,\cite{Ou10, Ou11}$)$\label{MTH}
A hypersurface $\varphi:M^{m}\to N^{m+1}(C)$ in a space forms of constant sectional curvature $C$ with mean curvature vector
field $\eta=H\xi$ is $f$-biharmonic if and only if the function $H$
 statisfies the following equation
\begin{equation}\label{fb}
\begin{cases}
\Delta(fH)-(fH)[|A|^{2}-
mC]=0,\\
 A\,({\rm grad}\,(fH)) +\frac{m}{2} (fH){\rm grad}\, H
=0,
\end{cases}
\end{equation}
where ${\rm Ric}^N : T_qN\to T_qN$ denotes the Ricci
operator of the ambient space defined by $\langle {\rm Ric}^N\,
(Z), W\rangle={\rm Ric}^N (Z, W)$ and  $A$ is the shape operator
of the hypersurface with respect to the unit normal vector field $\xi$.
\end{lemma}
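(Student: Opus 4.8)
The plan is to read $f$-biharmonicity straight off the field equation (\ref{fbeq}), namely $\tau_{2,f}(\varphi)=f\tau_2(\varphi)+(\Delta f)\tau(\varphi)+2\nabla^\varphi_{{\rm grad}\,f}\tau(\varphi)=0$, by computing each of the three summands for a hypersurface and then splitting the resulting identity into its normal and tangential parts. The two structural inputs I would use are that the tension field of an isometric immersion is $\tau(\varphi)=mH\xi$ (that is, $m$ times the mean curvature vector $\eta=H\xi$), and the Weingarten formula $\nabla^\varphi_X\xi=-A(X)$, which has no normal component because $\xi$ is a unit section of a one-dimensional normal bundle.

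First I would recall (or re-derive, see e.g. \cite{Ou11}) the standard decomposition of the bitension field of a hypersurface in a space form,
\[
\tau_2(\varphi)=m\Big\{\big[\Delta H-H(|A|^2-mC)\big]\xi-2A({\rm grad}\,H)-mH\,{\rm grad}\,H\Big\}.
\]
To obtain this I would differentiate $mH\xi$ twice with the pull-back connection, using Weingarten for the $\xi$-derivatives and the Gauss formula for the $A({\rm grad}\,H)$-derivatives; the term $\sum_i\nabla^M_{e_i}(A(e_i))$ is handled by the Codazzi equation, which in a space form gives $\sum_i(\nabla_{e_i}A)(e_i)={\rm grad}({\rm trace}\,A)=m\,{\rm grad}\,H$, while the curvature term is computed from $R^N(X,Y)Z=C(\langle Y,Z\rangle X-\langle X,Z\rangle Y)$ and produces the $mCH$ contribution. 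Setting $f\equiv1$ recovers the classical biharmonic hypersurface system, and this is the consistency check I would run before going further.

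Next I would substitute into (\ref{fbeq}). The other two summands are routine: $(\Delta f)\tau(\varphi)=m(\Delta f)H\xi$ is normal, and by Weingarten $2\nabla^\varphi_{{\rm grad}\,f}\tau(\varphi)=2m\langle{\rm grad}\,f,{\rm grad}\,H\rangle\,\xi-2mH\,A({\rm grad}\,f)$ splits into a normal and a tangential piece. Collecting the normal components and invoking the product rule $\Delta(fH)=f\Delta H+H\Delta f+2\langle{\rm grad}\,f,{\rm grad}\,H\rangle$ (valid since $\Delta={\rm div}\,{\rm grad}$ is the rough Laplacian used in (\ref{fbeq})), the three normal contributions reassemble into $m[\Delta(fH)-(fH)(|A|^2-mC)]$, which is the first equation of (\ref{fb}). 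For the tangential part, the term $-2HA({\rm grad}\,f)$ from the derivative summand combines with $-2fA({\rm grad}\,H)$ from $f\tau_2(\varphi)$, and by linearity of $A$ together with ${\rm grad}(fH)=f\,{\rm grad}\,H+H\,{\rm grad}\,f$ these merge into $-2A({\rm grad}(fH))$, whereas the lone $-mfH\,{\rm grad}\,H$ retains an unaltered ${\rm grad}\,H$ because it originates solely from $f\tau_2(\varphi)$; dividing by $-2$ yields $A({\rm grad}(fH))+\tfrac{m}{2}(fH)\,{\rm grad}\,H=0$.

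The hard part is the tangential computation of $\tau_2(\varphi)$, specifically the evaluation of $\sum_i\nabla^M_{e_i}(A(e_i))$: this is exactly where the space-form hypothesis enters, through the Codazzi equation that makes $A$ a Codazzi tensor with ${\rm div}\,A=m\,{\rm grad}\,H$. The second delicate point is the sign and normalization bookkeeping for the Laplacian, since it is precisely the $+2\langle{\rm grad}\,f,{\rm grad}\,H\rangle$ cross term, carrying the sign appropriate to $\Delta={\rm div}\,{\rm grad}$, that completes the Leibniz rule and lets the normal equation close up as $\Delta(fH)$. Once these are in place, the asymmetry of the tangential equation (a ${\rm grad}(fH)$ inside $A$ but a bare ${\rm grad}\,H$ in the other term) is automatic, reflecting that only the $f\tau_2(\varphi)$ summand carries the undifferentiated mean-curvature gradient.
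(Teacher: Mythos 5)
Your derivation is correct: the decomposition $\tau_2(\varphi)=m\{[\Delta H-H(|A|^2-mC)]\xi-2A(\mathrm{grad}\,H)-mH\,\mathrm{grad}\,H\}$, the Weingarten/Gauss/Codazzi bookkeeping, and the recombination of the normal part via $\Delta(fH)=f\Delta H+H\Delta f+2\langle\mathrm{grad}\,f,\mathrm{grad}\,H\rangle$ and of the tangential part via $\mathrm{grad}(fH)=f\,\mathrm{grad}\,H+H\,\mathrm{grad}\,f$ all check out and yield exactly (\ref{fb}). The paper offers no proof of Lemma \ref{MTH} — it is quoted from \cite{Ou10, Ou11} — and your argument is precisely the standard normal/tangential splitting of $\tau_{2,f}$ carried out in those references, so there is nothing further to compare.
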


We are ready to give a  characterization of proper $f$-biharmonic cylinders in $\r^3$.
\begin{theorem}\label{cy}
Let $X:(M^2,g)\to(\r^3,h={\rm d}x^2+{\rm d}y^2+{\rm d}z^2)$,\; $X(s,v)=a(s)+vb$  be  a cylinder with
the mean curvature function $H$, where $b$  is a unit constant vector in $\r^3$ and $a(s)$ is  an immersed regular curve in $\r^3$  parametrized by arc length  with the geodesic curvature $\kappa_1(s)$  satisfying  $h(a', b)=0$. Then, $X$ is  proper $f$-biharmonic if and only if one of the following cases happens:\\
$(1)$  $\kappa_1$ is a nonzero constant,  and the directrix $a(s)$ of the  cylinder is (a part of) a circle with radius $\frac{1}{\sqrt{|\kappa_1|}}$. Moreover, the  mean curvature $H=\frac{\kappa_1}{2}$ and the function $f=d_1e^{\kappa_1v }+d_2e^{-\kappa_1 v}$, where $d_1$ and $d_2$ are constants.\\or,\\
$(2)$  $\kappa_1(s)$ is  nonconstant, and the directrix $a(s)$ of the  cylinder has the geodesic curvature $\kappa_1(s)=\kappa_N(s,K)$ given by (\ref{RH00}) and the geodesic torsion $\kappa_2=0$, where $K$ is a constant. Furthermore, the  mean curvature $H=\frac{\kappa_N(s,K)}{2}$ and the function $f= \psi(v,K)\kappa_N^{-3/2}(s,K)$, where $\psi(v,K)$ given by (\ref{cy8}).
 \end{theorem}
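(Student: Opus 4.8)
The plan is to reduce the $f$-biharmonicity of the cylinder to the two scalar equations of Lemma \ref{MTH} specialized to a surface in $\r^3$ (so $m=2$, $C=0$), and then to analyze the resulting system by separation of variables in the flat coordinates $(s,v)$.

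First I would fix the geometry. Since $h(a',b)=0$ and $|b|=1$, the frame $\{X_s=a',\,X_v=b\}$ is orthonormal, so the induced metric is the flat $g=\mathrm{d}s^2+\mathrm{d}v^2$ and $\Delta$ is the flat Laplacian $\partial_s^2+\partial_v^2$ (the trace Laplacian of the induced connection, with the sign used in (\ref{BT1})). From $h(a',b)=0$ one gets $\tfrac{d}{ds}h(a,b)=0$, so $a(s)$ lies in a plane orthogonal to $b$; hence the directrix is planar with torsion $\kappa_2=0$, and its principal normal serves as the unit surface normal $\xi$. Computing $X_{ss}=a''=\kappa_1\xi$ and $X_{sv}=X_{vv}=0$ shows, after fixing the orientation, that the shape operator is diagonal with eigenvalues $\kappa_1(s)$ and $0$, so $H=\tfrac12\kappa_1$ and $|A|^2=\kappa_1^2$ depend on $s$ only. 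Substituting into (\ref{fb}) turns the system into $\Delta(fH)-\kappa_1^2(fH)=0$ together with $\kappa_1\,(fH)_s+H_s\,(fH)=0$, the $X_v$-component of the second equation being automatic.

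Next I would integrate the tangential equation. Writing $F=fH$, the relation $\kappa_1 F_s+\tfrac12\kappa_1' F=0$ is a first order linear ODE in $s$ (with $v$ a parameter) solved by $F=\phi(v)\,\kappa_1^{-1/2}$ for an arbitrary positive $\phi$; since $H=\tfrac12\kappa_1$ this gives $f=\psi(v)\,\kappa_1^{-3/2}$ with $\psi=2\phi$, which is already the announced form of $f$. It then remains to feed $F=\phi(v)\kappa_1^{-1/2}$ into the first equation and split according to whether $\kappa_1$ is constant. If $\kappa_1$ is a nonzero constant the directrix is a circle, $H_s=0$ makes the tangential equation vacuous, and $\Delta(fH)-\kappa_1^2 fH=0$ collapses to $\phi''(v)=\kappa_1^2\phi(v)$, giving $f=d_1e^{\kappa_1 v}+d_2e^{-\kappa_1 v}$, i.e. case $(1)$. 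If $\kappa_1$ is nonconstant I would expand $\Delta F=F_{ss}+F_{vv}$ for $F=\phi(v)\kappa_1(s)^{-1/2}$ and divide the first equation by $\phi\,\kappa_1^{-1/2}$; the outcome separates into a function of $s$ plus $\phi''/\phi$, forcing $\phi''/\phi=K$ for a separation constant $K$ and, on the $s$-side, $3\kappa_1'^2-2\kappa_1\kappa_1''=4\kappa_1^2(\kappa_1^2-K)$. This is exactly (\ref{ode}) with $A=1$, $C=K$ (equivalently (\ref{fc1})), so Proposition \ref{ODE} yields $\kappa_1=\kappa_N(s,K)$ of (\ref{RH00}), while $\phi''=K\phi$ integrates to the three-case function $\psi(v,K)$ of (\ref{cy8}); this is case $(2)$. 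Reading each step as an equivalence gives the ``if'' direction; properness holds because for $f\equiv1$ the first equation reduces to $\Delta H=H|A|^2$, which these cylinders violate (already the circle has $\Delta H=0\neq H|A|^2$), so the non-constant $f$ produced here is genuinely proper.

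I expect the main obstacle to be the bookkeeping in the non-constant case: correctly expanding $\Delta(\phi(v)\kappa_1^{-1/2})$, recognizing that it separates, and identifying the separation constant $K$ (an artifact of the PDE) with the Gauss curvature of the $2$-space form $N^2(K)$ in which the directrix becomes a proper $\bar f$-biharmonic curve. Matching the $s$-ODE to the normal form (\ref{ode}) with $A=1$, so that Proposition \ref{ODE} applies verbatim, is the crux; once this identification is made the remaining integrations ($\phi''=K\phi$ and the reconstruction of $a(s)$) are routine.
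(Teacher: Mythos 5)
Your proposal is correct and follows essentially the same route as the paper: reduce to the hypersurface system (\ref{fb}) with $m=2$, $C=0$, integrate the tangential equation to get $f=\psi(v)\kappa_1^{-3/2}$, then separate variables so that $\psi''=K\psi$ gives (\ref{cy8}) and the $s$-equation becomes (\ref{ode}) with $A=1$, $C=K$, handled by Proposition \ref{ODE}. The only substantive additions are cosmetic (working with $F=fH$ rather than $\ln f$) and your explicit properness check, which the paper leaves implicit; note only that in the constant-curvature case the tangential equation is not vacuous but forces $f_s=0$, which is exactly what you use.
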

\begin{proof}
A straightforward computation gives
\begin{equation}\label{cy1}\notag
\begin{array}{lll}
X_s=a'(s),\; X_v=b,\; X_{ss}=a''(s),\; X_{sv}=0,\;X_{vv}=0,\\
N=\frac{X_s\times X_v}{|X_s\times X_v|}=a'\times b,\;\; |a''(s)|=\kappa(s),
\end{array}
\end{equation}
where $\kappa$ is the curvature of the curve $a(s)$.
By a further computation, the first  fundamental form $I$  and the second fundamental form  $II$ of the cylinder given by
\begin{equation}\label{cy3}\notag
\begin{array}{lll}
I=g={\rm d}s^2+{\rm d}v^2,\;\; II= \kappa_N(s){\rm d}s^2,
\end{array}
\end{equation}
where $\kappa_N$  is  the normal curvature of the curve $a(s)$.
It follows that $\{e_1=\frac{\partial}{\partial s}$,\; $e_2=\frac{\partial}{\partial v},\;\xi=N\}$  forms an orthonormal frame  adapted to the surface
with $\xi$ being normal vector field, $A(e_1)=II(e_1,e_1)e_1+II(e_1,e_2)e_2=\kappa_N e_1$,  $A(e_2)=II(e_2,e_1)e_1+II(e_2,e_2)e_2=0$ and  $H=\frac{\kappa_N}{2}$. Substituting these into the $f$-biharmonic equation (\ref{fb}) with $m=2$ and $C=0$ we have
\begin{equation}\label{cy4}
\begin{cases}
\kappa_N^2\frac{\partial \ln f}{\partial s}=-\frac{3}{2}\kappa_N'(s)\kappa_N,\\
\kappa_N''(s)-\kappa_N^3+\kappa_N[\frac{\partial^2\ln f}{\partial s^2}+\frac{\partial^2\ln f}{\partial v^2}+(\frac{\partial\ln f}{\partial s})^2+(\frac{\partial\ln f}{\partial v})^2]+2\frac{\partial\ln f}{\partial s}\kappa_N'(s)=0.
\end{cases}
\end{equation}
If $\kappa_N=0$, i.e., $H=0$, then the surface is harmonic and hence biharmonic, not proper $f$-biharmonic. From now on, we assume that $\kappa_N\neq0$.
If $\kappa_N$ is a nonzero constant,  then   (\ref{cy4}) turns into
\begin{equation}\label{cy5}
\begin{array}{lll}
\frac{\partial \ln f}{\partial s}=0,\;\;-\kappa_N^2+\frac{\partial^2\ln f}{\partial v^2}+(\frac{\partial\ln f}{\partial v})^2=0,
\end{array}
\end{equation}
which is solved by $ f=d_1e^{v\kappa_N }+d_2e^{-v\kappa_N }$, where $d_1$ and $d_2$ are constants. Note that the curve $a(s)$ can be viewed as (a part of) a circle of  radius $\frac{1}{\sqrt{|\kappa_N|}}$.\\
We now assume that $\kappa$ is nonconstant and apply the first equation of (\ref{cy4}) to have
\begin{equation}\label{cy5}
\ln f=-\frac{3}{2}\ln |\kappa_N(s)| +\ln \psi(v),\;{\rm and\;hence}\;f=\psi(v)\kappa_N^{-3/2}(s),
\end{equation}
where $\psi(v)$ is a positive function.
Substituting (\ref{cy5}) into  the second equation of (\ref{cy4}) and simplifying the resulting equation we obtain
\begin{equation}\label{cy05}
-2\kappa_N\kappa_N''(s)+3\kappa_N'^2(s)-4\kappa_N^4+4\kappa_N^2(s)\frac{\psi''(v)}{\psi(v)}=0.
\end{equation}
The equation (\ref{cy05}) implies that for any $s,v$, we have
\begin{equation}\label{cy6}
\begin{array}{lll}
3\kappa_N'^2(s)-2\kappa_N\kappa_N''(s)=4\kappa_N^2(\kappa_N^2-K)\;\;{\rm and}\;\;
\frac{\psi''(v)}{\psi(v)}=K,
\end{array}
\end{equation}
 where $K$ is a constant.
The 2nd equation of (\ref{cy6}) is equivalent to $\psi''(v)=K\psi(v)$  solved by
 \begin{equation}\label{cy8}
\psi(v)= \psi(v,K)=\begin{cases}
d_5\sin(\sqrt{-K}\;v)+d_6\cos(\sqrt{-K}\;v),\;\;\; {\rm for}\; K<0,\\
d_3v+d_4,\;\;\;\;\;\;\;\;\;\;\;\;\;\;\;\;\; {\rm for}\; K=0,\\
d_1e^{\sqrt{K}\;v}+d_2e^{-\sqrt{K}\;v},\;\;\;\;\;\;\;\; {\rm for}\; K>0,
\end{cases}
\end{equation}
 where $d_i$, $i=1,2,\ldots,6,$  are constants.\\
An interesting thing is  that the 1st equation of (\ref{cy6}) happens to be the type of $ODE$ (\ref{ode}) in Proposition \ref{ODE} when $A=1$ and $C=K$, which is solved by
\begin{equation}\label{RH00}
\kappa_N(s)=\kappa_N(s,K)=\chi(s,0,K)=\begin{cases}
\frac{1}{C_1e^{2\sqrt{-K}\;s}+C_2e^{-2\sqrt{-K}\;s}+\sqrt{4C_1C_2+\frac{1}{K}}},\;\;\;\;\;\;\;\;{\rm for}\; K<0,\\
\frac{4C_3}{16+C_3^2(s+C_4)^2},\;\;\;\;\;\;\;\;\;\;\;\;\;\;\;\;\;\;\;\;\;\;\;\;\;\;\;\;\;\;{\rm for}\; K=0,\\
\frac{1}{C_4\cos(2\sqrt{K}\;s)+C_5\sin(2\sqrt{K}\;s)+ \sqrt{\frac{1}{K}+(C_4^2+C_5^2)}},\;{\rm for}\; K>0,
\end{cases}
\end{equation}
where $C_i$ are constants, $i=1,2,\ldots,6$.

It is not difficult to check that  $\nabla_{e_1}e_1=\kappa_Ne_3,\;\nabla_{e_1}e_3=-\kappa_Ne_1$ and\;$\nabla_{e_1}(-e_2)=0$, and hence $\{e_1,\;e_3=N,\;-e_2\}$ forms the Frenet frame along the curve $a(s)$. This implies that the curve $a(s)$ has the geodesic curvature $\kappa_1=\kappa_N$ and the geodesic torsion $\kappa_2=0$.

Summarizing all the above results, we complete the proof of the theorem.
\end{proof}
\begin{remark}\label{a1}
 Theorem \ref{cy}  recovers  the classification result of proper $f$-biharmonc cylinders with constant mean curvature  in \cite{Ou10}.
\end{remark}
\begin{corollary}\label{cyo}
Let $X:(M^2,g)\to\r^3$ be  a cylinder with\; $X(s,v)=a(s)+vb$ and
nonconstant mean curvature function $H$,  where $b$  is a unit constant vector in $\r^3$ and $a(s)$ is  an immersed regular curve in $\r^3$  parametrized by arc length  with the geodesic curvature $\kappa_1(s)$ satisfying  $h(a', b)=0$. Then, $X$ is  proper $f$-biharmonic if and only if the directrix $a(s)$
is a proper $\bar{f}$-biharmonic curve in a 2-space form $N^2(K)\subset\r^3$ with constant Gauss curvature $K$ for $\bar{f}=c_1\kappa_1^{-3/2}$, where $c_1>0$ is a constant and  $\kappa_1(s)=\kappa_N(s,K)$ given by (\ref{RH00}). Moreover, the function $f= \frac{\psi(v,K)}{c_1}\bar{f}=\psi(v,K)\kappa_N^{-3/2}(s,K)$, where $\psi(v,K)$ given by (\ref{cy8}).
 \end{corollary}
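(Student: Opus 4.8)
The plan is to read the corollary off directly by matching Theorem~\ref{cy}(2) against Theorem~\ref{RSH}(i): both impose literally the same conditions on the directrix, so the two notions of properness are forced to coincide. First I would invoke Theorem~\ref{cy}(2). Since $H$ is assumed nonconstant, that theorem says the cylinder $X(s,v)=a(s)+vb$ is proper $f$-biharmonic if and only if its directrix $a(s)$ has geodesic curvature $\kappa_1(s)=\kappa_N(s,K)$ given by (\ref{RH00}), geodesic torsion $\kappa_2=0$, and $f=\psi(v,K)\kappa_N^{-3/2}(s,K)$ with $\psi(v,K)$ as in (\ref{cy8}) for some constant $K$. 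The fact that $\kappa_1=\kappa_N$ and $\kappa_2=0$ are the genuine Frenet data of $a(s)$ is exactly the computation with the adapted frame $\{e_1,\,e_3=N,\,-e_2\}$ carried out at the end of the proof of Theorem~\ref{cy}.

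Next I would compare these conditions with the classification of proper $\bar f$-biharmonic curves. Applying Theorem~\ref{RSH}(i) to the $2$-space form $N^2(K)$, the curve $a(s)$ is a proper $\bar f$-biharmonic curve in $N^2(K)$ exactly when $\kappa_2=0$, $\kappa_1=\chi(s,0,K)$ and $\bar f=c_1\kappa_1^{-3/2}$. The one observation that makes everything fit --- the structural coincidence already remarked upon in the proof of Theorem~\ref{cy} --- is that the equation governing $\kappa_N$, i.e.\ the first equation of (\ref{cy6}), is nothing but (\ref{fc1}) with $C=K$, hence is the instance $A=1$, $C=K$ of (\ref{ode}), whose nonconstant solution is $\chi(s,0,K)$. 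Therefore $\kappa_N(s,K)=\chi(s,0,K)$, and the curvature requirements of Theorem~\ref{cy}(2) and of Theorem~\ref{RSH}(i) agree term by term. Since both statements are biconditionals, this yields the claimed equivalence: $X$ is proper $f$-biharmonic if and only if $a(s)$ is a proper $\bar f$-biharmonic curve in $N^2(K)$.

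Finally I would record the relation between the two conformal factors. From $\bar f=c_1\kappa_1^{-3/2}=c_1\kappa_N^{-3/2}(s,K)$ I solve $\kappa_N^{-3/2}(s,K)=\bar f/c_1$ and substitute into $f=\psi(v,K)\kappa_N^{-3/2}(s,K)$, obtaining $f=\frac{\psi(v,K)}{c_1}\bar f=\psi(v,K)\kappa_N^{-3/2}(s,K)$, as asserted. The only bookkeeping point is properness: because $H=\kappa_N/2$ is nonconstant, $\kappa_1$ is nonconstant, so the directrix lands in case (i) of Theorem~\ref{RSH} as a genuinely proper (non-biharmonic) $\bar f$-biharmonic curve, matching the nonconstant branch of Theorem~\ref{cy}. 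I do not expect a real obstacle here; the entire content is the recognition that the hypersurface system (\ref{fb}) for a cylinder separates into the curve ODE (\ref{fc1}) in the variable $s$ and the linear equation $\psi''=K\psi$ in the variable $v$, so the $s$-dependence of a proper $f$-biharmonic cylinder is precisely that of a proper $\bar f$-biharmonic curve in $N^2(K)$.
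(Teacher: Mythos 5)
Your proposal is correct and follows essentially the same route as the paper: both arguments rest on the observation that the first equation of (\ref{cy6}) extracted from the cylinder system (\ref{fb}) is exactly the proper $\bar f$-biharmonic curve equation in a space form of curvature $K$, with the $v$-dependence absorbed into $\psi(v,K)$ and $f=\frac{\psi(v,K)}{c_1}\bar f$. The only cosmetic difference is that you match the explicit solved curvatures via Theorem \ref{RSH}(i) (i.e. $\kappa_N(s,K)=\chi(s,0,K)$), whereas the paper matches the unsolved ODEs via Proposition \ref{fc0}(i); since Theorem \ref{RSH} is derived from Proposition \ref{fc0}, this is the same argument.
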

\begin{proof}
From the proof of Theorem \ref{cy} and  Statement (i) of Proposition \ref{fc0}, together with the assumption that $H$ is nonconstant, one sees that the directrix $a(s)$ on the surface has the geodesic torsion $\kappa_2=0$ and  nonconstant geodesic curvature $\kappa_1=\kappa_N=2H$  solving  the 1st equation of (\ref{cy6}) which is a proper $\bar{f}$-biharmonic curve equation in a space form of constant sectional curvature $K$ for $\bar{f}=c_1\kappa_1^{-3/2}$, where constant $c_1>0$. This implies that the  directrix $a(s)$ can be viewed as a proper $\bar{f}$-biharmonic curve in a 2-space form $N^2(K)$ of constant Gauss curvature $K$. Note that  $f= \frac{\psi(v,K)}{c_1}\bar{f}$, where $\psi(v,K)$ given by (\ref{cy8}).
\end{proof}

 \begin{proposition}\label{ccy1}
There is neither a proper $f$-biharmonic cone nor tangent surface in $\r^3$.
 \end{proposition}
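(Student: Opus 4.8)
The plan is to handle cones and tangent developable surfaces in parallel, feeding each into the same hypersurface equation (\ref{fb}) (with $m=2$, $C=0$) that powered Theorem \ref{cy}, and then to read off a contradiction from its second, \emph{vector}, equation. First I would fix standard parametrizations: a cone (vertex placed at the origin) as $X(s,v)=v\beta(s)$ with $\beta$ a unit-speed curve on the unit sphere, so that $|\beta|=|\beta'|=1$ and $\langle\beta,\beta'\rangle=0$; and a tangent surface as $X(s,v)=a(s)+va'(s)$ with $a$ a unit-speed space curve of curvature $\kappa$ and torsion $\tau$. Computing the first and second fundamental forms in each case (exactly as in the proof of Theorem \ref{cy}) confirms that both surfaces are developable, so one principal curvature is identically zero and $|A|^2=4H^2$. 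The decisive structural fact is that in each parametrization the unit ruling field $e_2:=\partial_v$ lies in the kernel of the shape operator, $A\,e_2=0$, because $\langle X_{sv},N\rangle=\langle X_{vv},N\rangle=0$.

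The core of the argument is then a one-line pairing. Since $A$ is self-adjoint and $A\,e_2=0$, taking the inner product of the second equation of (\ref{fb}), i.e.\ $A({\rm grad}(fH))+(fH)\,{\rm grad}\,H=0$, against $e_2$ gives
\begin{equation}\notag
\langle A({\rm grad}(fH)),e_2\rangle=\langle {\rm grad}(fH),A\,e_2\rangle=0,
\end{equation}
so that the $e_2$-component collapses to $(fH)\,e_2(H)=0$, that is, $(fH)\,\partial_v H=0$. Because $f>0$, on any region where the immersion is not harmonic we have $H\neq0$, and hence $\partial_v H=0$ there: the mean curvature would have to be constant along the rulings.

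It remains to compute $H$ and see that this is impossible. Here I would use $H=k(s)/(2v)$ for the cone, with $k(s)=\langle\beta'',\beta\times\beta'\rangle$, and $H=-\tau(s)/\big(2v\kappa(s)\big)$ for the tangent surface; in both cases $H=g(s)/v$ for a function $g$ of $s$ alone, whence $\partial_v H=-g(s)/v^2$ vanishes precisely where $H$ vanishes. Thus $(fH)\,\partial_v H=0$ forces $H\equiv0$, making the immersion minimal, hence harmonic, hence not proper $f$-biharmonic; this proves the proposition. The only real work is the (routine) fundamental-form and mean-curvature computations, together with the care needed to certify that $\partial_v$ is genuinely the unit null direction of $A$ so the pairing is legitimate; the conceptual point is simply that, unlike the cylinder where $H=\kappa_N(s)/2$ is $v$-free, cones and tangent surfaces are homogeneous in the ruling parameter and so inevitably have $H\propto 1/v$.
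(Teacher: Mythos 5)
Your argument is correct, and for the cone it is essentially identical to the paper's: both project the second equation of (\ref{fb}) onto the ruling direction $e_2=\partial_v$, use $A e_2=0$ to kill the shape-operator term, and conclude from $H=w(s)/(2v)$ that $(fH)\,\partial_v H=-fH^2/v$ forces $H\equiv 0$. Where you genuinely diverge is the tangent surface. The paper first computes $A$ in the orthonormal frame $\{\varepsilon_1,\varepsilon_2\}$, where it is \emph{not} diagonal, then invokes flatness via the Gauss equation to produce a principal frame $\{e_1,e_2\}$ with $\lambda_2=0$, derives $fHe_2(H)=0$, and finally expands $e_2(H)$ into the identity (\ref{ts6}), a quadratic in $v$ whose coefficients must all vanish, giving $\tau=0$. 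You shortcut all of this by observing that $\partial_v$ itself satisfies $A\partial_v=0$ -- which is legitimate even though $\{\partial_s,\partial_v\}$ is not orthogonal here, since $II(\partial_v,\cdot)\equiv 0$ and nondegeneracy of the metric then give $A\partial_v=0$, while $\langle\mathrm{grad}\,\phi,\partial_v\rangle=\partial_v\phi$ holds in any frame -- so the pairing immediately yields $(fH)\,\partial_v H=0$ with $H=-\tau/(2v\kappa)$, hence $fH^2/v=0$ and $H\equiv 0$. In fact your $\partial_v$ is (up to sign) exactly the principal direction $e_2$ the paper constructs, so the two arguments are secretly the same computation; yours simply identifies the null direction of $A$ for free from $X_{vv}=0$ and $\langle X_{sv},N\rangle=0$, avoiding both the diagonalization and the polynomial-in-$v$ bookkeeping. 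This is a cleaner execution of the same underlying idea, and I see no gap in it.
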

\begin{proof}
Let $b(s)$ be a curve on a unit sphere parametrized by arc length (i.e., a spherical curve).  Consider  a cone $X:(M^2,g)\to(\r^3,h={\rm d}x^2+{\rm d}y^2+{\rm d}z^2)$  into  $\r^3$  with $X(s,v)=a+vb(s)$ , where $a$ is a constant vector. A simple computation, we obtain the first  and the second fundamental forms $I$ and $II$ of the cone as
\begin{equation}\label{pz3}\notag
\begin{array}{lll}
I=g=v^2{\rm d}s^2+{\rm d}v^2,\;\;
II=vw(s){\rm d}s^2,
\end{array}
\end{equation}
where $w(s)=b''\cdot(b'\times b)$.
One can check that $\{e_1=\frac{1}{v}\frac{\partial}{\partial s}$,\;$e_2=\frac{\partial}{\partial v},\;\xi=b'\times b\}$ constitutes an orthonormal frame
adapted to the surface with $\xi$ being normal to the surface. By a straightforward computation, we have  $A(e_1)=II(e_1,e_1)e_1+II(e_1,e_2)e_2=\frac{w}{v}e_1$, $A(e_2)=II(e_2,e_1)e_1+II(e_2,e_2)e_2=0$ and  $H=\frac{w(s)}{2v}$.
Substituting these into the second equation of $(\ref{fb})$ and simplifying the resulting equation yields
\begin{equation}\label{pz4}
H[3e_1(H)+2He_1(\ln f)]=0,\;He_2(H)=H\frac{\partial}{\partial v}\left(\frac{w(s)}{2v}\right)=0.
\end{equation}
The 2nd equation of (\ref{pz4}) implies that
$-\frac{H^2}{ 2v^2}=0$ and hence $H=0$, i.e., the cone has to be minimal. It follows that any $f$-biharmonic cone in $\r^3$ has to be harmonic and hence biharmonic, but not proper $f$-biharmonic.

For a tangent surface in  $\r^3$, let $a(s)$ be a curve parametrized by arclength and let $X:(M^2,g)\to \r^3$ be a tangent surface  with $X(s,v)=a(s)+va'(s)$. By simple computation, the first  and the second fundamental forms $I$ and $II$ of the tangent surface given by respectively
\begin{equation}\label{ts1}
\begin{array}{lll}
I=g=(1+v^2\kappa^2){\rm d}s^2+2{\rm d}s{\rm d}v+{\rm d}v^2,\;\;
II=-v\kappa\tau{\rm d}s^2,
\end{array}
\end{equation}
where $\kappa=\kappa(s)$ and \;$\tau=\tau(s)$ are the curvature and torsion of   $a(s)$ respectively.

One can easily check  that $\{\varepsilon_1=\frac{1}{\sqrt{1+v^2\kappa^2}}\frac{\partial}{\partial s},\;\varepsilon_2=\frac{1}{v\kappa\sqrt{(1+v^2\kappa^2}}\frac{\partial}{\partial s}-\frac{\sqrt{1+v^2\kappa^2}}{v\kappa}\frac{\partial}{\partial v},\\ \xi=\frac{a''\times a'}{\kappa}\}$ forms an orthonormal frame adapted to the surface with the normal vector field $\xi$.  Suppose that $A(\varepsilon_1)=\kappa_1e_1+\kappa_{12}\varepsilon_2$ and $A(\varepsilon_2)=\kappa_{21}\varepsilon_1+\kappa_{2}\varepsilon_2$. A direct computation gives $\kappa_1=II(\varepsilon_1,\varepsilon_1)=-\frac{v\kappa\tau}{1+v^2\kappa^2}$, $\kappa_2=II(\varepsilon_2, \varepsilon_2)=-\frac{\tau}{v\kappa(1+v^2\kappa^2)}$ and $\kappa_{12}=\kappa_{21}=II(e_1,e_2)=-\frac{\tau}{1+v^2\kappa^2}$.  It follows that the mean curvature
 $H=\frac{\kappa_1+\kappa_2}{2}=-\frac{\tau}{2v\kappa}$ and $\kappa_1\kappa_2=\frac{\tau^2}{(1+v^2\kappa^2)^2}$. Note that $H=0$ is equivalent to $\tau=0$, then the surface is not proper $f$-biharmonic in this case.

 Hereafter, assume that $H\neq0$. Let  $\{e_1=\cos\theta\varepsilon_1+\sin\theta\varepsilon_2,\;e_2=-\sin\theta\varepsilon_1+\cos\theta\varepsilon_2,\;\xi\}$
 be another orthonormal frame adapted to the surface with the normal vector field  $\xi$  such that $A(e_1)=\lambda_1e_1$ and $A(e_2)=\lambda_{2}e_2$,
 where $\theta$ is the angle between $e_1$ and $\varepsilon_1$. Since the tangent surface is flat,  it follows from  the Gauss equation of the tangent surface that $\lambda_1\lambda_2=0$. Without a loss of generality, we may suppose that $\lambda_2=0$ and hence $\lambda_1=2H\neq0$. We can easily conclude that
 $\kappa_1=2H\cos^2\theta,\;\kappa_2=2H\sin^2\theta,\;\kappa_{12}=\tau=2H\sin\theta\cos\theta,\;
\cos^2\theta=\frac{v^2\kappa^2}{1+v^2\kappa^2}$\;and\; $\sin^2\theta=\frac{1}{1+v^2\kappa^2}.$
 If the tangent surface is $f$-biharmonic, by (\ref{fb}) with $m=2$ and $C=0$, we have
 \begin{equation}\label{ts4}
\begin{array}{lll}
\Delta(fH)-4fH^3=0,\;2He_1(fH)+fHe_1(H)=0,\;fHe_2(H)=0,
\end{array}
\end{equation}
which implies that $e_2(H)=0$. Therefore,  we get $[-\sin\theta\varepsilon_1+\cos\theta\varepsilon_2](H)=0$, i.e., $\sin^2\theta(\varepsilon_1(H))^2-\cos^2\theta(\varepsilon_2(H))^2=0$. A further computation gives
\begin{equation}\label{ts5}
\begin{array}{lll}
0=\sin^2\theta[\varepsilon_1(-\frac{\tau}{2v\kappa})]^2
-\cos^2\theta[\varepsilon_2(-\frac{\tau}{2v\kappa})]^2\\
=\frac{1}{1+v^2\kappa^2}\left(\frac{1}{\sqrt{1+v^2\kappa^2(s)}}\frac{\partial}{\partial s}(\frac{\tau}{2v\kappa})\right)^2
-\frac{v^2\kappa^2}{1+v^2\kappa^2}\left(\frac{1}{v\kappa\sqrt{(1+v^2\kappa^2}}\frac{\partial}{\partial s}(\frac{\tau}{2v\kappa})-\frac{\sqrt{1+v^2\kappa^2}}{v\kappa}\frac{\partial}{\partial v}(\frac{\tau}{2v\kappa})\right)^2\\
=-\frac{(\frac{\tau}{\kappa})^2}{4v^4}-\frac{[(\frac{\tau}{\kappa})^2]'(s)}{4v^3(1+v^2\kappa^2)}.
\end{array}
\end{equation}
 It is easy to check that (\ref{ts5}) implies that
 \begin{equation}\label{ts6}
\begin{array}{lll}
\tau^2(s)\;v^2+[(\frac{\tau}{\kappa})^2]'(s)\;v+(\frac{\tau}{\kappa})^2(s)=0,
\end{array}
\end{equation}
which, together with any $s,\;v$, implies that $\tau=0$. This contradicts to the assumption that  $\tau\neq0$.
 Combining these, any $f$-biharmonic tangent surface in $\r^3$ is  harmonic, but not proper $f$-biharmonic.

Summarizing all the above results, the proposition follows.
\end{proof}
Applying Theorem \ref{cy} and Proposition \ref{ccy1}, we have
\begin{corollary}\label{zh}
A proper $f$-biharmonic  developable surface in $\r^3$  exists only in the case when the surface is either a circle cylinder or a cylinder
containing a  directrix  with nonconstant geodesic curvature $\kappa_1(s)=\kappa_N(s,K)$ given by (\ref{RH00}) and the geodesic torsion  $\kappa_2(s)=0$.
\end{corollary}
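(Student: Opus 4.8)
The plan is to reduce the statement to the two results already established, namely the classification of proper $f$-biharmonic cylinders in Theorem \ref{cy} and the nonexistence of proper $f$-biharmonic cones and tangent surfaces in Proposition \ref{ccy1}. The starting point is the classical structure theorem for developable (i.e.\ flat ruled) surfaces in $\r^3$: away from singular points, every such surface is locally a piece of a plane, a generalized cylinder, a cone, or a tangent developable. I would recall this decomposition and stress that it is exhaustive, so that classifying the proper $f$-biharmonic developables amounts to deciding, type by type, which members can be proper $f$-biharmonic.

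Next I would dispose of the degenerate and forbidden types. A plane is totally geodesic, hence harmonic and a fortiori biharmonic, so it is never proper $f$-biharmonic; equivalently it is the limiting cylinder with $\kappa_N\equiv 0$, precisely the case $H=0$ that is excluded at the outset of the proof of Theorem \ref{cy}. Cones and tangent developables are removed directly by Proposition \ref{ccy1}, which asserts that neither admits a proper $f$-biharmonic structure in $\r^3$. Consequently the only surviving type is the cylinder.

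Finally I would invoke Theorem \ref{cy} to read off the precise shape of the directrix. That theorem states that a cylinder $X(s,v)=a(s)+vb$ is proper $f$-biharmonic exactly when either (1) the geodesic curvature $\kappa_1$ of $a(s)$ is a nonzero constant, so that $a(s)$ is (a part of) a circle and the surface is a circular cylinder, or (2) $\kappa_1(s)$ is nonconstant, equal to $\kappa_N(s,K)$ given by (\ref{RH00}), with vanishing geodesic torsion $\kappa_2(s)=0$. Collecting these two surviving subcases yields exactly the statement of the corollary.

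The argument is essentially dichotomy bookkeeping once the two quoted results are in hand, so the only real subtlety — and the step I would be most careful about — is the appeal to the structure theorem: one must ensure that the plane/cylinder/cone/tangent classification is genuinely exhaustive for the surfaces under consideration, and that the plane is correctly absorbed into the excluded $H=0$ case, so that no proper $f$-biharmonic developable slips through the cracks.
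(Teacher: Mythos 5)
Your proposal is correct and follows essentially the same route as the paper, which simply derives the corollary by combining Theorem \ref{cy} with Proposition \ref{ccy1} via the standard local classification of developable surfaces into planes, cylinders, cones, and tangent surfaces. You are merely more explicit than the paper about invoking that structure theorem and about absorbing the plane into the excluded minimal ($H=0$) case, which is a reasonable amount of added care but not a different argument.
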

We will construct a family of  cylinders  whose directrices  are  proper $\bar{f}$-biharmonic curves in  $\r^2$ for $\bar{f}=c_1\kappa_N^{-3/2}(s,0)$, which are   proper $f$-biharmonic cylinders for $f= \frac{\psi(v,0)}{c_1}\bar{f}$,\; where $\psi(v,0)$ and $\kappa_N^{-3/2}(s,0)$ given by (\ref{cy8}) and (\ref{RH00}) respectively.
\begin{example}\label{e2}
 A family of cylinders $X:(M^2,g)\to\r^3$, $X(s,v)=(\frac{4\ln|\sqrt{16+C_3^2s^2}+C_3s|}{C_3}-\frac{4\ln C_3}{C_3},\;\frac{\sqrt{16+C_3^2s^2}}{C_3},\; v)$ are proper $f$-biharmonic  for $f=\frac{c_1(d_3v+d_4)(16+C_3^2s^2)^{3/2}}{(4C_3)^{3/2}}$,\;where $d_3$, $d_4$, $c_1$ and \;$C_3$ are positive constants. This family of cylinders followed by a totally geodesic embedding $\phi:\r^3\to \r^n$  are  proper $f$-biharmonic submanifolds $\phi\circ X:(M^2,g)\to\r^n$.

In fact, it is easy to check that $X(s,v)=a(s)+vb$, where  $a(s)=(\frac{4\ln|\sqrt{16+C_3^2s^2}+C_3s|}{C_3}-\frac{4\ln C_3}{C_3},\;\frac{\sqrt{16+C_3^2s^2}}{C_3},\;0)$ are a family of  curves in $\r^2\subset\r^3$  and  $b=(0,0,1)$ is a unit  constant vector in $\r^3$. Clearly, $|a'|^2=1$ and $h(a',b)=0$.
Therefore, by Example \ref{cf1}, any  curve $a(s)$ is a proper $\bar{f}$-biharmonic curve  in $\r^2\subset\r^3$ for $\bar{f}=c_1(\frac{4C_3}{16+C_3^2s^2})^{-3/2}$, where $c_1>0$ is a constant. Then, applying Corollary \ref{cyo}, we obtain the example.
\end{example}
We know from \cite{Ou10} that  a map $\varphi: (M^2, g)\to (N^n, h)$ is an $f$-biharmonic map if and only if the conformal immersion $\varphi:(M^2, f^{-1}g)\to (N^n, h)$ is a biharmonic map. Therefore,
applying  Theorem \ref{cy} and Proposition \ref{ccy1}, we  have the following corollaries as follows
\begin{corollary}\label{ccy}
Under the same assumptions as in Theorem \ref{cy}, then we have\\
$(1)$  If $\kappa_1$ is a constant and $f=d_1e^{\kappa_1 v}+d_2e^{-\kappa_1 v}>0$, where $d_1$ and $d_2$ are  constants, then the conformal immersion $X:(M^2,f^{-1}g)\to\r^3$ with $X(s,v)=a(s)+vb$ is proper biharmonic,\;or\\
$(2)$ If $\kappa_1(s)=\kappa_N(s,K)$ and $\psi(v,K)$ given by (\ref{RH00}) and (\ref{cy8}) respectively, and $f= \psi(v,K)\kappa_N^{-3/2}(s,K)$, where $K$ is a constant, then the conformal immersion $X:(M^2,f^{-1}g)\to\r^3$ with $X(s,v)=a(s)+vb$ is proper biharmonic.
 \end{corollary}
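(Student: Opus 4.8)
The plan is to reduce the statement to the conformal-change equivalence recalled just before the corollary: a map $\varphi:(M^2,g)\to(N^n,h)$ is $f$-biharmonic if and only if the conformal immersion $\varphi:(M^2,f^{-1}g)\to(N^n,h)$ is biharmonic (see \cite{Ou10}). Thus the biharmonicity of $X:(M^2,f^{-1}g)\to\r^3$ will follow at once, provided we know that $X:(M^2,g)\to\r^3$ is $f$-biharmonic for the prescribed $f$.

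First I would note that in each of the two cases the hypotheses on $\kappa_1$ and on $f$ coincide exactly with the conclusions produced by Theorem \ref{cy}: case $(1)$ reproduces the constant-$\kappa_1$ branch with $f=d_1e^{\kappa_1 v}+d_2e^{-\kappa_1 v}$, and case $(2)$ reproduces the nonconstant branch with $\kappa_1(s)=\kappa_N(s,K)$ and $f=\psi(v,K)\kappa_N^{-3/2}(s,K)$. Hence Theorem \ref{cy} already guarantees that in both cases the cylinder $X:(M^2,g)\to\r^3$ is proper $f$-biharmonic, so in particular $f$-biharmonic, and the equivalence above immediately gives that $X:(M^2,f^{-1}g)\to\r^3$ is biharmonic.

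It remains to upgrade ``biharmonic'' to ``proper biharmonic'', that is, to verify that the conformal immersion is not harmonic. Here I would invoke the conformal invariance of the harmonic map equation on a two-dimensional domain: since $\dim M=2$ one has $\tau_{f^{-1}g}(X)=f\,\tau_g(X)$, so $X:(M^2,f^{-1}g)\to\r^3$ is harmonic if and only if the isometric immersion $X:(M^2,g)\to\r^3$ is harmonic, i.e. if and only if $X$ is minimal. In case $(1)$ the mean curvature $H=\kappa_1/2$ is a nonzero constant, while in case $(2)$ it equals $\kappa_N(s,K)/2$, which is nonconstant and nowhere zero by (\ref{RH00}); in either case $X$ is not minimal. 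Consequently $X:(M^2,f^{-1}g)\to\r^3$ is biharmonic but not harmonic, i.e. proper biharmonic, which is precisely the assertion in both cases.

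All the computations are routine. The only point that genuinely needs attention is that the quoted equivalence concerns (non-proper) biharmonicity versus $f$-biharmonicity, so the word ``proper'' in the conclusion is not automatic; I therefore expect the sole (minor) obstacle to be the non-harmonicity check, which I settle via the conformal invariance of harmonicity in dimension two together with the non-vanishing mean curvature supplied by Theorem \ref{cy}.
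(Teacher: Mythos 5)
Your proposal is correct and follows essentially the same route as the paper, which simply combines the conformal-change equivalence from \cite{Ou10} with the conclusions of Theorem \ref{cy}. Your additional verification that the conformal immersion is not harmonic (via conformal invariance of harmonicity in dimension two and the nonvanishing mean curvature) is a point the paper leaves implicit, and it is handled correctly.
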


\begin{corollary}\label{ccy2}
We have\\
(1) There can not exist  a proper biharmonic conformal immersion of a cone or a tangent surface  into  $\r^3$.\\
(2) A conformal immersion of a developable surface  into  $\r^3$  is proper biharmonic if and only if the surface is either a circle cylinder or a cylinder
containing   a directrix   with  nonconstant geodesic curvature $\kappa_1(s)=\kappa_N(s,K)$ given by (\ref{RH00}) and the torsion  $\kappa_2(s)=0$.
 \end{corollary}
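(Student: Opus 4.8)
The plan is to reduce both assertions to the already-established classification of proper $f$-biharmonic developable surfaces via the conformal-to-biharmonic dictionary recalled just before Corollary~\ref{ccy}: a map $\varphi:(M^2,g)\to(N^n,h)$ is $f$-biharmonic if and only if the conformal immersion $\varphi:(M^2,f^{-1}g)\to(N^n,h)$ is biharmonic. Applied to an immersion whose image in $\r^3$ is a developable surface with induced metric $g$, this says that the isometric immersion $X:(M^2,g)\to\r^3$ is $f$-biharmonic exactly when the conformal immersion $X:(M^2,f^{-1}g)\to\r^3$ is biharmonic. Since $X$ is literally the same map, the type of the image surface (cone, tangent surface, or cylinder) and its mean curvature $H$ are invariants of the image and are unaffected by rescaling the domain metric; this is what lets me transport every earlier classification across the dictionary.

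Next I would pin down the matching of the word \emph{proper} on the two sides. A biharmonic conformal immersion is proper (non-harmonic) precisely when $H\not\equiv0$, and an $f$-biharmonic isometric immersion into $\r^3$ is proper (non-biharmonic) precisely when $H\not\equiv0$, because a biharmonic surface in $\r^3$ is minimal. As $H$ is the same function for the two parametrizations of the image, the notions ``proper $f$-biharmonic isometric immersion'' and ``proper biharmonic conformal immersion'' correspond under the dictionary. Alternatively, one may read this directly off the earlier statements, in which every surface classified as proper $f$-biharmonic carries an explicit $H\neq0$.

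For part (1) I would argue by contradiction: a proper biharmonic conformal immersion $X:(M^2,f^{-1}g)\to\r^3$ of a cone or a tangent surface would, by the dictionary, make $X:(M^2,g)\to\r^3$ a proper $f$-biharmonic isometric immersion of the same cone or tangent surface, contradicting Proposition~\ref{ccy1}. For part (2) the ``only if'' direction follows in the same way from Corollary~\ref{zh}: a proper biharmonic conformal immersion of a developable surface forces the underlying isometric immersion to be a proper $f$-biharmonic developable surface, hence a circle cylinder or a cylinder whose directrix has geodesic curvature $\kappa_1(s)=\kappa_N(s,K)$ given by (\ref{RH00}) and torsion $\kappa_2=0$. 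The converse (existence) is exactly Corollary~\ref{ccy}, which already exhibits these two families of cylinders as proper biharmonic conformal immersions.

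The only genuinely delicate point is the bookkeeping of properness in the second paragraph: I must make sure that the non-biharmonic condition on the isometric side and the non-harmonic condition on the conformal side both collapse to $H\neq0$. Once that is settled, everything else is a routine transport of Proposition~\ref{ccy1}, Corollary~\ref{zh}, and Corollary~\ref{ccy} through the conformal-to-biharmonic equivalence, so the proof is short.
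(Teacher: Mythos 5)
Your proposal is correct and follows essentially the same route as the paper, which derives Corollary \ref{ccy2} by combining the dictionary ``$X:(M^2,g)\to\r^3$ is $f$-biharmonic iff $X:(M^2,f^{-1}g)\to\r^3$ is biharmonic'' with Theorem \ref{cy}, Proposition \ref{ccy1} and Corollary \ref{zh}. Your explicit check that both notions of properness collapse to $H\not\equiv 0$ (via conformal invariance of harmonicity in dimension two and the minimality of biharmonic surfaces in $\r^3$) is a point the paper leaves implicit, and is a worthwhile addition.
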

\begin{remark}\label{a2}
Corollary \ref{ccy2} recovers the result  in \cite{Ou9}  stated that there is no a proper biharmonic conformal immersion of a  circular cone into $\r^3$.
\end{remark}
Adopting  the same notations as in Example \ref{e2} and applying Example \ref{e2} and Corollary \ref{ccy}, we have
\begin{example}\label{ex3}
The conformal immersions of a family of cylinders $X:(M^2,f^{-1}g)\to\r^3$, $X(s,v)=(\frac{4\ln|\sqrt{16+C_3^2s^2}+C_3s|}{C_3}-\frac{4\ln C_3}{C_3},\;\frac{\sqrt{16+C_3^2s^2}}{C_3},\; v)$ are proper biharmonic  with  $f=\frac{c_1(d_3v+d_4)(16+C_3^2s^2)^{3/2}}{(4C_3)^{3/2}}$.
\end{example}
\section{ $f$-biharmonic Riemannian submersions from 3-manifolds}
In this section, we will study $f$-biharmonic Riemannian submersions from 3-manifolds by using the integrability data of an adapted frame of the Riemannian submersion. We also construct a family of proper $f$-biharmonic Riemannian submersions from  Sol space.

Recall that an orthonormal frame $\{e_1, e_2,e_3\}$ is adapted to a Riemannian
submersion $\pi:( M^3 , g)\to (N^2,h)$
with $e_3$ being vertical, then we have (see \cite{WO})
\begin{equation}\label{RC0}
\begin{array}{lll}
[e_1,e_3]=\kappa_1e_3,\;
[e_2,e_3]=\kappa_2e_3,\;
[e_1,e_2]=f_1 e_1+f_2e_2-2\sigma e_3,\\
\nabla_{e_{1}} e_{1}=-f_1e_2,\;\;\nabla_{e_{1}} e_{2}=f_1
e_1-\sigma e_{3},\;\;\nabla_{e_{1}} e_{3}=\sigma
e_{2},\\  \nabla_{e_{2}} e_{1}=-f_2 e_{2}+\sigma
e_3,\;\;\nabla_{e_{2}} e_{2}=f_2 e_{1}, \;\;\nabla_{e_{2}}
e_{3}=-\sigma e_{1},\\  \nabla_{e_{3}}
e_{1}=-\kappa_1e_{3}+\sigma e_{2}, \nabla_{e_{3}} e_{2}= -\sigma
e_{1}-\kappa_2 e_3, \nabla_{e_{3}} e_{3}=\kappa_1 e_{1}+\kappa_2 e_2,
\end{array}
\end{equation}
where $f_1,\;f_2,\;\kappa_1,\;\kappa_2\;{\rm and}\; \sigma \in C^{\infty}(M)$ called the integrability data of the adapted frame of  $\pi$. The bitension field of $\pi$ given by
\begin{equation}\label{fR5}
\begin{array}{lll}
\tau_2(\pi)=
[-\Delta^{M}\kappa_1-f_1e_1(\kappa_2)-e_1(\kappa_2
f_1)-f_2e_2(\kappa_2)-e_2(\kappa_2 f_2)\\+\kappa_1\kappa_2 f_1
+\kappa_2^2f_2 +\kappa_1\{-K^{N}+f_{1}^{2}+f_{2}^{2}\}
]\varepsilon_1
+(-\Delta^{M}\kappa_2+f_1e_1(\kappa_1)\\+e_1(\kappa_1
f_1)+f_2e_2(\kappa_1)+e_2(\kappa_1
f_2)
-\kappa_1\kappa_2 f_2-\kappa_1^2f_1
+\kappa_2\{-K^{N}+f_{1}^{2}+f_{2}^{2}\} )\varepsilon_2,
\end{array}
\end{equation}
where $d\pi(e_i)=\varepsilon_i,\;i=1,2$.

Using an adapted frame and the associated integrability data, the $f$-biharmonicity of a Riemannian submersion  can be described as follows.
\begin{proposition}\label{fR0}
Let $\pi:(M^3,g)\to (N^2,h)$ be a Riemannian submersion
with the adapted frame $\{e_1,\; e_2, \;e_3\}$ and the integrability
data $ \{f_1, f_2, \kappa_1,\;\kappa_2,\;\sigma\}$. Then, the
Riemannian submersion $\pi$ is $f$-biharmonic if and only if
\begin{equation}\label{fR1}
\begin{cases}
-\Delta^{M}(f\kappa_1)-2\sum\limits_{i=1}^{2}f_i e_i(f\kappa_2)-f\kappa_2\sum\limits_{i=1}^{2}\left(e_i( f_i)
-\kappa_i f_i\right)+f\kappa_1\left(-K^{N}+\sum\limits_{i=1}^{2}f_{i}^{2}\right)
=0,\\
-\Delta^{M}(f\kappa_2)+2\sum\limits_{i=1}^{2}f_i e_i(f\kappa_1)+f\kappa_1\sum\limits_{i=1}^{2}(e_i( f_i)
-\kappa_i f_i)+f\kappa_2\left(-K^{N}+\sum\limits_{i=1}^{2}f_{i}^{2}\right)=0,\\
\end{cases}
\end{equation}
where
$K^{N}=R^{N}_{1212}\circ\pi=e_1(f_2)-e_2(f_1)-f_{1}^{2}-f_{2}^{2}
$ is the Gauss curvature of Riemannian manifold $(N^2,h)$.
\end{proposition}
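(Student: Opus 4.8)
The plan is to substitute the three constituents of the $f$-bitension field directly into the expansion \eqref{fbeq}, namely
\[
\tau_{2,f}(\pi)=f\,\tau_2(\pi)+(\Delta^{M} f)\,\tau(\pi)+2\nabla^{\pi}_{{\rm grad}\,f}\tau(\pi),
\]
and then to show that, component by component in the orthonormal coframe $\{\varepsilon_1,\varepsilon_2\}$ on $N$, these three blocks reorganise into the system \eqref{fR1}. The first step is to record the tension field. Since $e_3$ spans the one-dimensional fibres and $\nabla_{e_3}e_3=\kappa_1e_1+\kappa_2e_2$ by \eqref{RC0}, the only nonvanishing contribution to $\tau(\pi)=\mathrm{Trace}_g\nabla\mathrm{d}\pi$ is $\nabla\mathrm{d}\pi(e_3,e_3)=-\mathrm{d}\pi(\nabla_{e_3}e_3)$, so that $\tau(\pi)=-\kappa_1\varepsilon_1-\kappa_2\varepsilon_2$ (as in \cite{WO}), where $\varepsilon_i=\mathrm{d}\pi(e_i)$. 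As $\tau_2(\pi)$ is already supplied by \eqref{fR5}, the only genuinely new object is the first-order term $\nabla^{\pi}_{{\rm grad}\,f}\tau(\pi)$.

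To compute that term I would write ${\rm grad}\,f=\sum_{i=1}^{3}e_i(f)\,e_i$ and use the pullback-connection rule $\nabla^{\pi}_{e_i}\varepsilon_j=\nabla^{N}_{\mathrm{d}\pi(e_i)}\varepsilon_j$. Two observations make this explicit. Because $\mathrm{d}\pi(e_3)=0$ we get $\nabla^{\pi}_{e_3}\varepsilon_j=0$, so only the scalar derivatives $e_3(\kappa_i)$ survive in the vertical direction; and projecting the horizontal Levi-Civita relations of \eqref{RC0} gives $\nabla^{N}_{\varepsilon_1}\varepsilon_1=-f_1\varepsilon_2$, $\nabla^{N}_{\varepsilon_1}\varepsilon_2=f_1\varepsilon_1$, $\nabla^{N}_{\varepsilon_2}\varepsilon_1=-f_2\varepsilon_2$, $\nabla^{N}_{\varepsilon_2}\varepsilon_2=f_2\varepsilon_1$. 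A Leibniz expansion then yields
\[
\langle\nabla^{\pi}_{{\rm grad}\,f}\tau(\pi),\varepsilon_1\rangle=-\{e_1(f)[e_1(\kappa_1)+\kappa_2f_1]+e_2(f)[e_2(\kappa_1)+\kappa_2f_2]+e_3(f)e_3(\kappa_1)\},
\]
together with the analogous $\varepsilon_2$-component.

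The assembly step is then purely algebraic. Taking the $\varepsilon_1$-component of $f\tau_2(\pi)+(\Delta^{M} f)\tau(\pi)+2\nabla^{\pi}_{{\rm grad}\,f}\tau(\pi)$ and invoking the Leibniz identities $\Delta^{M}(f\kappa_1)=f\Delta^{M}\kappa_1+\kappa_1\Delta^{M}f+2\langle{\rm grad}\,f,{\rm grad}\,\kappa_1\rangle$ and $e_i(f\kappa_2)=f\,e_i(\kappa_2)+\kappa_2\,e_i(f)$, one checks that the three blocks collapse exactly onto the first line of \eqref{fR1}: the rough-Laplacian part $-f\Delta^{M}\kappa_1$ combines with $-\kappa_1\Delta^{M}f$ and the cross term to form $-\Delta^{M}(f\kappa_1)$, while the terms $-2ff_ie_i(\kappa_2)$ fuse with $-2\kappa_2f_ie_i(f)$ into $-2\sum_i f_ie_i(f\kappa_2)$. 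The second line of \eqref{fR1} follows identically from the $\varepsilon_2$-component.

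The delicate point, which I expect to be the main source of errors rather than of conceptual difficulty, is the double role played by $\kappa_1$ and $\kappa_2$ in \eqref{fR5}. They occur there both as the coefficients of $\tau(\pi)$, which must absorb the factor $f$ and be differentiated inside $\Delta^{M}$ and $e_i$, and as genuine integrability data arising from $\nabla_{e_3}e_i$, which must remain untouched. The recombination works precisely because $\Delta^{M}$ and $e_i$ act only on the coefficient occurrences; the curvature and the lower-order products such as $\kappa_1\kappa_2f_1+\kappa_2^2f_2$ then pass through carrying just the single factor $f$, reproducing $f\kappa_2\sum_i\kappa_if_i$ in \eqref{fR1}. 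Keeping these two roles separate throughout the bookkeeping is the crux of the verification.
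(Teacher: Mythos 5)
Your proposal is correct and follows essentially the same route as the paper: compute $\tau(\pi)=-\kappa_1\varepsilon_1-\kappa_2\varepsilon_2$, take $\tau_2(\pi)$ from (\ref{fR5}), evaluate $\nabla^{\pi}_{{\rm grad}f}\tau(\pi)$ via the pullback connection using (\ref{RC0}), and recombine the three blocks of (\ref{fbeq}) by the Leibniz rules for $\Delta^{M}$ and $e_i$; your $\varepsilon_1$-component of the connection term agrees exactly with the paper's $-\{\langle{\rm grad}f,{\rm grad}\kappa_1\rangle+\kappa_2f_1e_1(f)+\kappa_2f_2e_2(f)\}$.
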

\begin{proof}
A direct computation using (\ref{RC0}) gives
\begin{equation}\label{fR2}
\begin{array}{lll}
\tau(\pi)=\nabla^{\pi}_{e_i}d\pi(e_i)-d\pi(\nabla^{M}_{e_i}e_i)=-d\pi(\nabla^{M}_{e_3}e_3)=-\kappa_1\varepsilon_1-\kappa_2
\varepsilon_2,\\
\Delta^Mf=\sum\limits_{i=1}^3e_ie_i(f)+f_1e_2(f)-f_2e_1(f)-\kappa_1e_1(f)-\kappa_2e_2(f),\\
2\nabla^\pi_{{\rm grad}f}\tau(\pi)=-2\{\langle{\rm grad}f,{\rm grad}\kappa_1\rangle+\kappa_2f_1e_1(f)+\kappa_2f_2e_2(f)\}\varepsilon_1 \\-
2\{\langle{\rm grad}f,{\rm grad}\kappa_2\rangle-\kappa_1f_1e_1(f)-\kappa_1f_2e_2(f)\}
\varepsilon_2.
\end{array}
\end{equation}
Substituting   (\ref{fR2}) and (\ref{fR5}) into $f$-biharmonic map equation (\ref{fbeq}) and a straightforward computation gives
\begin{equation}\notag
\begin{array}{lll}
0=\tau_{2,f}(\pi)=-J^{\pi}(f\tau(\pi))\\
=f\tau^2(\pi)+(\Delta^M f)\tau(\pi)+2\nabla^\pi_{{\rm grad}f}\tau(\pi)\\
=[-\Delta^{M}(f\kappa_1)-f_1 e_1(f\kappa_2)-e_1(f\kappa_2 f_1)-f_2
e_2(f\kappa_2)-e_2(f\kappa_2 f_2)
\\+f\kappa_2(\kappa_1 f_1 +\kappa_2 f_2)
+f\kappa_1\{-K^{N}+f_{1}^{2}+f_{2}^{2}\}
]\varepsilon_1,\\
+[-\Delta^{M}(f\kappa_2)+f_1 e_1(f\kappa_1)+e_1(f\kappa_1 f_1)+f_2
e_2(f\kappa_1)+e_2(f\kappa_1 f_2)\\
-f\kappa_1(\kappa_1f_1 +\kappa_2f_2)
+f\kappa_2\{-K^{N}+f_{1}^{2}+f_{2}^{2}\}] \varepsilon_2.
\end{array}
\end{equation}
From which the proposition follows.
\end{proof}
When the integrability data $\kappa_2=0$ we have the following corollary
\begin{corollary}\label{Cor1}
Let $\pi:(M^3,g)\to(N^2,h)$ be a
Riemannian submersion with the adapted frame $\{e_1,\; e_2,
\;e_3\}$ and the integrability data $ \{f_1, f_2,
\kappa_1,\;\kappa_2,\;\sigma\}$\; with $\kappa_2=0$. Then, the Riemannian
submersion $\pi$ is $f$-biharmonic if and only if
\begin{equation}\label{cor1}
\begin{cases}
-\Delta^{M}(f\kappa_1)+f\kappa_1\{-K^{N}+f_{1}^{2}+f_{2}^{2}\}
=0,\\
2f_1 e_1(f\kappa_1)+2f_2e_2(f\kappa_1)+f\kappa_1\{e_1( f_1)+
+e_2( f_2)\}-f\kappa_1^2f_1  =0.
\end{cases}
\end{equation}
\end{corollary}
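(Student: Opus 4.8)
The plan is to specialize the general $f$-biharmonicity equations (\ref{fR1}) from Proposition~\ref{fR0} to the case $\kappa_2=0$ and simplify. The derivation is entirely algebraic once Proposition~\ref{fR0} is in hand, so the main task is careful bookkeeping rather than any genuine obstacle. First I would substitute $\kappa_2=0$ directly into each of the two scalar equations in (\ref{fR1}), treating the two components separately.

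For the first equation of (\ref{fR1}), setting $\kappa_2=0$ kills the middle two terms outright: the term $-2\sum_{i}f_i e_i(f\kappa_2)$ vanishes because $f\kappa_2=0$, and the term $-f\kappa_2\sum_{i}(e_i(f_i)-\kappa_i f_i)$ vanishes for the same reason. What survives is $-\Delta^M(f\kappa_1)+f\kappa_1(-K^N+f_1^2+f_2^2)=0$, which is exactly the first equation of (\ref{cor1}).

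For the second equation of (\ref{fR1}), the term $-\Delta^M(f\kappa_2)$ vanishes since $f\kappa_2=0$, and the final term $f\kappa_2(-K^N+f_1^2+f_2^2)$ vanishes likewise. The remaining terms are $2\sum_{i}f_i e_i(f\kappa_1)+f\kappa_1\sum_{i}(e_i(f_i)-\kappa_i f_i)$. Writing the sums out with $\kappa_2=0$ so that $\sum_i \kappa_i f_i=\kappa_1 f_1$, this becomes $2f_1 e_1(f\kappa_1)+2f_2 e_2(f\kappa_1)+f\kappa_1(e_1(f_1)+e_2(f_2))-f\kappa_1^2 f_1=0$, which matches the second equation of (\ref{cor1}) verbatim.

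Since both target equations follow by direct substitution, there is really no hard part to flag; the only point requiring mild attention is keeping track of which terms carry a factor of $\kappa_2$ (and hence drop) versus which survive, and correctly reducing $\sum_{i=1}^2 \kappa_i f_i$ to $\kappa_1 f_1$ in the second equation. I would close by remarking that (\ref{cor1}) is thus an immediate consequence of Proposition~\ref{fR0}.
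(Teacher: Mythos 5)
Your proposal is correct and is exactly the argument the paper intends: the corollary is stated as an immediate specialization of Proposition \ref{fR0}, obtained by setting $\kappa_2=0$ in (\ref{fR1}) so that every term carrying a factor of $f\kappa_2$ drops and $\sum_{i}\kappa_i f_i$ reduces to $\kappa_1 f_1$. The paper gives no separate proof, and your bookkeeping matches what such a proof would contain.
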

\begin{example}\label{e1}
For any positive constants  $A, B$  and  the function $f=Ae^{\sqrt{2}z}+Be^{-\sqrt{2}z}$  globally defined  on Sol space, the Riemannian submersion  given by
\begin{equation}\notag
\begin{array}{lll}
\pi:(\r^3,g_{Sol}= e^{2z}{\rm d}x^{2}+e^{-2z}{\rm d}y^{2}+{\rm
 d}z^{2})\to (\r^2, e^{-2z}{\rm d}y^{2}+{\rm
 d}z^{2}),\;
\pi(x,y,z)=(y,z)
\end{array}
\end{equation}
is a proper $f$-biharmonic map.

In fact, it is easy to  check that the orthonormal frame  $\{e_1=\frac{\partial}{\partial z},\;e_2=e^{z}\frac{\partial}{\partial y}, \;e_3=e^{-z}\frac{\partial}{\partial x}\}$ on Sol space is adapted  to the Riemannian submersion $\pi$ with
$e_3$ being vertical. A straightforward computation gives
\begin{equation}\label{PS1}
\begin{array}{lll}
[e_1, e_3] = -e_3,\;[e_2, e_3] = 0, \;[e_1, e_2] = e_2,\;f_1=\kappa_2=\sigma =0,\; -f_2= \kappa_1=-1,\\
({\rm and\; hence})\; K^N=e_1(f_2)-e_2(f_1)-f_1^2-f_2^2=-1.
\end{array}
\end{equation}
Substituting (\ref{PS1}) into (\ref{cor1}) yields
\begin{equation}\label{fTh1}
\begin{array}{lll}
f_{zz}+e^{-2z}f_{xx}-2f=0,\;
f_y=0.
\end{array}
\end{equation}
We find $f=Ae^{\sqrt{2}z}+Be^{-\sqrt{2}z}$ to be a special solution of (\ref{fTh1}),
where $A$ and $B$ are positive constants. The bitension field $\tau_2(\pi)=-2\frac{\partial}{\partial z}\neq0$ implies that $\pi$ is not biharmonic. Thus, we obtain the example.
\end{example}

\section {$f$-biharmonic  Riemannian submersions from $3$-space forms}
 A biharmonic Riemannian submersion from a 3-space form   into a surface  has to be harmonic (cf.\cite {WO}).
We want to know whether there is a proper $f$-biharmonic Riemannian submersion from a 3-space form.
Let   $\pi:(M^3(c),g)\to (N^2(c),h)$ be a Riemannian submersion from  a 3-space form of constant sectional curvature $c$ into a surface with constant Gauss curvature $c$. It follows  from  Lemma 3.2 in \cite{WO} that we can choose an orthonormal frame $\{e_1, e_2, e_3\} $ adapted to the Riemannian submersion with the integrability data $\{ f_1, f_2, \kappa_1, \kappa_2, \sigma\}$ and $\kappa_2 = 0$ such that
\begin{equation}\label{RSB1}
\begin{array}{ccc}
e_1(\sigma )-2\kappa_1\sigma=0,\;
e_1(\kappa_1)+\sigma^2-\kappa_{1}^2=c,\;
e_3(\sigma)=\kappa_{1}f_{1}=0,\;
K^N-3\sigma^2=c,\\e_3(\kappa_1)
=e_2(\sigma)=
e_2(\kappa_{1})=0,\;
\sigma^{2}-\kappa_1f_2=c,\;K^N=e_1(f_2)-e_2(f_1)-f_{1}^{2}-f_{2}^{2}.
\end{array}
\end{equation}
\begin{remark}\label{6}
Note that $\kappa_{1}=\kappa_{2}=0$, $\pi$ is harmonic and hence biharmonic. We now suppose that $\kappa_1\neq0$,\;$\kappa_2=0$ and the Gauss curvature of the base space $K^N=c$.  A simple computation using
(\ref{RSB1}) gives
\begin{equation}\label{RSB2}
\begin{array}{ccc}
\kappa_1\neq0,\;\sigma=f_{1}=e_2(\kappa_{1})=e_3(\kappa_1)=0,\\
\kappa_1f_2=-c,\;e_1(\kappa_1)=\kappa_{1}^2+c,\;
K^N=e_1(f_2)-f_{2}^{2}=c.
\end{array}
\end{equation}
Here, a Riemannian
submersion $\pi:(M^3(c),g)\to (N^2(c),h)$  may be the map  as $\pi_{U}:(M^3(c)\supseteq U,g)\to (V\subseteq N^2(c),h)$ from a subset $M^3(c)\supseteq U$ to  a subset $V\subseteq N^3(c)$. In spite of this, we denote  the
map $\pi_{U}:(M^3(c)\supseteq U,g)\to (V\subseteq N^2(c),h)$ as  $\pi:(M^3(c),g)\to (N^2(c),h)$ later in the rest of this section by abuse of notations.
\end{remark}
\begin{theorem}\label{Th2}
Let $\pi:(M^3(c),g)\to (N^2(c),h)$ be a Riemannian
submersion from a 3-space form of  constant sectional curvature $c$ into a surface of constant Gauss curvature $c$  with an adapted frame $\{e_1,\; e_2, \;e_3\}$ and the
integrability data $ f_1, f_2, \kappa_1,\;
\sigma$ and $\kappa_2=0$. Then, $\pi$ is proper $f$-biharmonic if and only if
 \begin{equation}\label{th2}
\begin{cases}
-\Delta^{M}(f\kappa_1) +f\kappa_1(-c+c^2/\kappa_1^2)=0,\\
ce_2(f)=0,\\
-\Delta^{M}(\kappa_1) +\kappa_1(-c+c^2/\kappa_1^2)\neq0.
\end{cases}
\end{equation}
\end{theorem}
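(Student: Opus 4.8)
The plan is to substitute the integrability data recorded in (\ref{RSB2}) of Remark \ref{6} straight into the $f$-biharmonic submersion equations (\ref{cor1}) of Corollary \ref{Cor1}, and then to isolate the biharmonicity condition separately so as to handle the word \emph{proper}.

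First I would invoke Corollary \ref{Cor1}: since $\kappa_2=0$, the submersion $\pi$ is $f$-biharmonic if and only if the two equations in (\ref{cor1}) hold. Into these I feed the relations from (\ref{RSB2}): namely $\sigma=f_1=e_2(\kappa_1)=e_3(\kappa_1)=0$, $\kappa_1 f_2=-c$ (so that $f_2=-c/\kappa_1$ and $f_2^2=c^2/\kappa_1^2$), $e_1(\kappa_1)=\kappa_1^2+c$, and $K^N=c$. In the first equation of (\ref{cor1}) the bracket $-K^N+f_1^2+f_2^2$ then collapses to $-c+c^2/\kappa_1^2$, which reproduces the first line of (\ref{th2}) at once.

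Next I would simplify the second equation of (\ref{cor1}). The vanishing $f_1=0$ kills the terms $2f_1e_1(f\kappa_1)$, $-f\kappa_1^2 f_1$ and $f\kappa_1 e_1(f_1)$. The remaining contribution $f\kappa_1 e_2(f_2)$ also vanishes, because $f_2=-c/\kappa_1$ with $c$ constant together with $e_2(\kappa_1)=0$ forces $e_2(f_2)=0$. Using $e_2(\kappa_1)=0$ once more I get $e_2(f\kappa_1)=\kappa_1 e_2(f)$, so the one surviving term is $2f_2 e_2(f\kappa_1)=2(-c/\kappa_1)\kappa_1 e_2(f)=-2c\,e_2(f)$. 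Hence the second equation reduces to $c\,e_2(f)=0$, which is the second line of (\ref{th2}).

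Finally, to account for \emph{proper}, I would identify biharmonicity as the constant-$f$ case of the same equations: when $f$ is constant the last two terms of the $f$-biharmonic equation (\ref{fbeq}) drop out, so $\tau_{2,f}(\pi)=f\tau_2(\pi)$, and biharmonicity $\tau_2(\pi)=0$ is equivalent to (\ref{cor1}) with $f$ constant. Running a constant $f$ through the two reductions above makes the second equation vacuous ($0=0$) and turns the first into $-\Delta^M(\kappa_1)+\kappa_1(-c+c^2/\kappa_1^2)=0$. Thus $\pi$ is biharmonic exactly when this equality holds, and demanding that $\pi$ be $f$-biharmonic but not biharmonic appends precisely the strict inequality in the third line of (\ref{th2}). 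I expect the only delicate points to be the two identities $e_2(f_2)=0$ and $e_2(f\kappa_1)=\kappa_1 e_2(f)$; both rest squarely on the relation $e_2(\kappa_1)=0$ from (\ref{RSB2}), and once they are in hand everything else is routine bookkeeping of the substitution.
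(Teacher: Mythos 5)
Your proof is correct and follows essentially the same route as the paper: substitute the reduced integrability data (\ref{RSB2}) into Corollary \ref{Cor1} to get the first two lines of (\ref{th2}), then append the non-biharmonicity condition. The only cosmetic difference is that the paper obtains the third line by computing $\tau_2(\pi)$ directly from (\ref{fR5}), whereas you specialize (\ref{cor1}) to constant $f$ --- these are equivalent --- and your sign $f_2=-c/\kappa_1$ is the one actually consistent with (\ref{RSB2}), the discrepancy with the paper's $f_2=c/\kappa_1$ being immaterial since only $f_2^2$ and $f_2\kappa_1 e_2(f)$ enter the final equations.
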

\begin{proof}
Since the  Gauss curvature of the base space $K^{N}=c$, $\kappa_2=0$ and the assumption that $\pi$ is proper $f$-biharmonic, then we have $\kappa_1\neq0$. From these and
  Remark \ref{6}, we conclude that $ f_2=c/\kappa_1$ and hence $e_2(f_2)=0$.  Substituting this and (\ref{RSB2}) into (\ref{cor1}) we have $-\Delta^{M}(f\kappa_1) +f\kappa_1(-c+c^2/\kappa_1^2)=0$ and $ce_2(f)=0$. It is not difficult to check  that  the bitension field $\tau_2(\pi)=[-\Delta^{M}(\kappa_1) +\kappa_1(-c+c^2/\kappa_1^2)]\varepsilon_1$. Combining these, then $\pi$ is proper $f$-biharmonic  if and only if (\ref{th2}) holds. From which the theorem follows.
\end{proof}
Applying Theorem \ref{Th2} with $c=0$ we get
\begin{proposition}\label{r}
For the function $f=\rho>0$, then a Riemannian
submersion $\pi:(\r^3\backslash\{0\},{\rm d}\rho^2+{\rm d}z^2+\rho^2{\rm d}\theta^2)\to (\r^2\backslash\{0\},{\rm d}\rho^2+{\rm d}z^2)$  with $\pi(\rho,z,\theta)=(\rho,z)$ is a proper $f$-biharmonic map.
\end{proposition}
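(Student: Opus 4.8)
The plan is to realize this as a direct application of Theorem \ref{Th2} in the flat case $c=0$, so the real task is to exhibit a suitable adapted frame, read off the integrability data, and then verify the three conditions of (\ref{th2}) for the particular choice $f=\rho$.

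First I would observe that $(\r^3\backslash\{0\},\,{\rm d}\rho^2+{\rm d}z^2+\rho^2{\rm d}\theta^2)$ is nothing but flat Euclidean space written in cylindrical coordinates, and the base is flat as well, so $c=0$ throughout. Since the fibres of $\pi$ are the $\theta$-curves, the natural adapted orthonormal frame is $e_1=\partial_\rho,\ e_2=\partial_z,\ e_3=\rho^{-1}\partial_\theta$ with $e_3$ vertical. Computing the three Lie brackets and comparing with (\ref{RC0}) should give the integrability data $f_1=f_2=\sigma=\kappa_2=0$ and $\kappa_1=-\rho^{-1}$. I would sanity-check this against (\ref{RSB2}) with $c=0$: the relations $\kappa_1 f_2=-c$ and $e_1(\kappa_1)=\kappa_1^2+c$ both hold, as does $K^N=c=0$.

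The key observation, which makes the first equation of (\ref{th2}) collapse, is that the product $f\kappa_1=\rho\cdot(-\rho^{-1})=-1$ is constant, so $\Delta^M(f\kappa_1)=0$ automatically (the $c$-dependent term also vanishes since $c=0$); the second equation $c\,e_2(f)=0$ is vacuous in the flat case. For the properness inequality I would specialize the frame form of the Laplacian in (\ref{fR2}) — which with the above data reduces to the usual cylindrical Laplacian $\partial_\rho^2+\partial_z^2+\rho^{-2}\partial_\theta^2+\rho^{-1}\partial_\rho$ — and compute $\Delta^M(\kappa_1)=\Delta^M(-\rho^{-1})=-\rho^{-3}\neq0$, so that $-\Delta^M(\kappa_1)\neq0$ and $\pi$ fails to be biharmonic. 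All three conditions of Theorem \ref{Th2} then hold, giving the claim.

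I do not expect a substantive obstacle; the proof is essentially a bookkeeping exercise once the frame is fixed. The only places demanding care are getting the sign of $\kappa_1$ right from the bracket convention in (\ref{RC0}) and remembering to use the frame-adapted expression for $\Delta^M$ (rather than a naive coordinate Laplacian) when testing (\ref{th2}). The conceptual heart is simply the fortunate fact that $f=\rho$ is exactly the weight rendering $f\kappa_1$ constant while keeping $\kappa_1$ itself non-harmonic.
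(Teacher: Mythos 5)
Your proposal is correct and follows essentially the same route as the paper: the same adapted frame $\{\partial_\rho,\partial_z,\rho^{-1}\partial_\theta\}$, the same integrability data $f_1=f_2=\sigma=\kappa_2=0$, $\kappa_1=-1/\rho$, and the same reduction of Theorem \ref{Th2} with $c=0$ to $\Delta^M(f/\rho)=0$. The only cosmetic difference is that the paper solves the resulting ODE to arrive at $f=\rho$ as a special solution, whereas you verify directly that $f\kappa_1=-1$ is constant (and you make the properness check $\Delta^M(-1/\rho)=-\rho^{-3}\neq0$ explicit, which the paper leaves implicit).
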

\begin{proof}
It is not difficult to check that the orthonormal frame  $\{e_1=\frac{\partial}{\partial\rho},\; e_2=\frac{\partial}{\partial z}, \;e_3=\frac{1}{\rho}\frac{\partial}{\partial\theta}\}$ on $(\r^3\backslash\{0\},{\rm d}\rho^2+{\rm d}z^2+\rho^2{\rm d}\theta^2)$
is adapted to the Riemannian submersion $\pi$ with ${\rm d}\pi(e_3 ) = 0$
and ${\rm d}\pi(e_i ) = \varepsilon_i, i = 1,2$, where $\{\varepsilon_1=\frac{\partial}{\partial\rho},\; \varepsilon_2=\frac{\partial}{\partial z}\}$  forms an orthonormal
frame on the base space $(\r^2\backslash\{0\},{\rm d}\rho^2+{\rm d}z^2)$. A straightforward computation gives
\begin{equation}\label{pr1}
\begin{array}{lll}
[e_1, e_3] = -\frac{1}{\rho} e_3, [e_2, e_3] = 0, [e_1, e_2] = 0,\;
f_1=f_2=\sigma=\kappa_2 = 0,\; \kappa_1=-\frac{1}{\rho}\neq0.
\end{array}
\end{equation}

Using (\ref{RC0}) and (\ref{pr1}) with $c=0$,  (\ref{th2}) reduces to
 \begin{equation}\label{pr2}
\begin{array}{ccc}
\Delta \left(\frac{f}{\rho}\right)=0.
\end{array}
\end{equation}
Looking for a special solution of the form  $f=f(\rho)$,  then (\ref{pr2}) becomes $
\left(\frac{f}{\rho}\right)''+\frac{1}{\rho}\left(\frac{f}{\rho}\right)'=0,$
which is solved by $
\frac{f}{\rho}=C_1+C_2\ln\rho,$
where $C_1, C_2$ are constants. Taking $C_1=1$ and $C_2=0$, we obtain $f=\rho$.
Thus, we obtain the proposition.
\end{proof}
Applying Theorem \ref{Th2} with $c=-1$ we have
\begin{proposition}\label{h}
For any positive constants\;$C_1$, $C_2$ and the function  $f=C_1e^{(1+\sqrt{3})\rho}+C_2e^{(1-\sqrt{3})\rho}$, then the Riemannian submersion $\pi:(H^3,{\rm d}\rho^2+e^{-2\rho}{\rm d}z^2+e^{-2\rho}{\rm d}\theta^2)\to (H^2,{\rm d}\rho^2+e^{-2\rho}{\rm d}z^2)$  with $\pi(\rho,z,\theta)=(\rho,z)$  is a proper $f$-biharmonic map.
\end{proposition}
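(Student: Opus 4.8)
The plan is to follow the template of Proposition \ref{r} and apply Theorem \ref{Th2} with $c=-1$. First I would exhibit an adapted orthonormal frame for the submersion, namely $e_1=\frac{\partial}{\partial\rho}$, $e_2=e^{\rho}\frac{\partial}{\partial z}$, $e_3=e^{\rho}\frac{\partial}{\partial\theta}$ on $(H^3,{\rm d}\rho^2+e^{-2\rho}{\rm d}z^2+e^{-2\rho}{\rm d}\theta^2)$, with $e_3$ vertical and ${\rm d}\pi(e_3)=0$, ${\rm d}\pi(e_1)=\varepsilon_1$, ${\rm d}\pi(e_2)=\varepsilon_2$ forming an orthonormal frame on the base $H^2$.

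Next I would read off the integrability data from the Lie brackets. A direct computation gives $[e_1,e_3]=e_3$, $[e_2,e_3]=0$ and $[e_1,e_2]=e_2$, so comparison with (\ref{RC0}) yields $\kappa_1=1$, $f_2=1$ and $f_1=\sigma=\kappa_2=0$; in particular $\kappa_1$ is a nonzero constant. I would then verify consistency with Remark \ref{6} and (\ref{RSB2}): indeed $\kappa_1 f_2=1=-c$, $e_1(\kappa_1)=0=\kappa_1^2+c$, and the Gauss curvature $K^N=e_1(f_2)-f_2^2=-1=c$, so the hypotheses of Theorem \ref{Th2} are satisfied.

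With these data the system (\ref{th2}) simplifies sharply because $\kappa_1\equiv1$. The third (properness) condition reads $-\Delta^M(\kappa_1)+\kappa_1(1+1/\kappa_1^2)=2\neq0$, which holds automatically and certifies that $\pi$ is not biharmonic; equivalently $\tau_2(\pi)=2\varepsilon_1\neq0$. The second condition becomes $e_2(f)=0$, i.e. $\partial_z f=0$, and the first becomes $-\Delta^M f+2f=0$. Restricting the search to $f=f(\rho)$ (which is compatible with $e_2(f)=0$), I would use the Laplacian formula in (\ref{fR2}), which here gives $\Delta^M f=f''-2f'$, so that $\Delta^M f=2f$ turns into the constant-coefficient ODE $f''-2f'-2f=0$. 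Its characteristic roots are $1\pm\sqrt{3}$, producing $f=C_1e^{(1+\sqrt{3})\rho}+C_2e^{(1-\sqrt{3})\rho}$, which is positive for positive $C_1,C_2$.

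There is no serious obstacle here: the statement is a direct application of Theorem \ref{Th2}, and the only points requiring care are fixing the signs of the integrability data correctly and observing that $\kappa_1$ is constant, which makes $\Delta^M\kappa_1=0$ and renders the properness condition immediate. The mildly delicate steps are confirming that the ansatz $f=f(\rho)$ respects the constraint $e_2(f)=0$ and that the resulting solution is globally positive on $H^3$, both of which hold for the stated range of constants.
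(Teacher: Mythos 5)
Your proposal is correct and follows essentially the same route as the paper: the same adapted frame, the same integrability data $\kappa_1=f_2=1$, $f_1=\sigma=\kappa_2=0$, and the same reduction of (\ref{th2}) with $c=-1$ to $-\Delta^M f+2f=0$, $e_2(f)=0$. Your only addition is to spell out the ODE $f''-2f'-2f=0$ and its characteristic roots $1\pm\sqrt{3}$, where the paper merely verifies the stated $f$ by substitution.
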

\begin{proof}
We can easily check that the orthonormal frame  $\{e_1=\frac{\partial}{\partial\rho},\; e_2=e^{ \rho}\frac{\partial}{\partial z}, \;e_3=e^{ \rho}\frac{\partial}{\partial\theta}\}$ on $(H^3,{\rm d}\rho^2+e^{-2\rho}{\rm d}z^2+e^{-2\rho}{\rm d}\theta^2)$
is adapted to the Riemannian submersion $\pi$ with
${\rm d}\pi(e_3 ) = 0$. By a simple computation yields
\begin{equation}\label{p1}
[e_1, e_3] =  e_3, [e_2, e_3] = 0, [e_1, e_2] = e_2,\;f_1=\sigma=\kappa_2 = 0,\; \kappa_1=f_2=1.
\end{equation}
By  (\ref{RC0}) and (\ref{p1}), together with $c=-1$,  (\ref{th2}) reduces to
 \begin{equation}\label{p2}
\begin{array}{ccc}
-\Delta^{M}f +2f=0,\;
e_2(f)=0.
\end{array}
\end{equation}
 We can easily check that
$f=C_1e^{(1+\sqrt{3})\rho}+C_2e^{(1-\sqrt{3})\rho}$ satisfies (\ref{p2}),
where $C_1$  and $C_2$ are positive constants. Thus, we get the proposition.
\end{proof}
As an application of  Theorem \ref{Th2} with $c=1$, we have
\begin{proposition}\label{s}
For a positive function $f=f(\rho)$ defined on an open interval $I\subset(0,\frac{\pi}{2})$  solves the following $ODE$
\begin{equation}\label{S}
f''+\frac{2\cos2\rho-4}{\sin2\rho} f'+\frac{1+2\sin^2\rho}{\sin^2\rho}f=0,
\end{equation}
 then the Riemannian submersion $\pi:(S^3\supset I\times S^1\times S^1,{\rm d}\rho^2+\cos^2\rho{\rm d}z^2+\sin^2\rho{\rm d}\theta^2)\to (S^2\supset I\times S^1,{\rm d}\rho^2+\cos^2\rho{\rm d}z^2)$  with $\pi(\rho,z,\theta)=(\rho,z)$  is a proper $f$-biharmonic map.

\end{proposition}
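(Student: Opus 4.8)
The plan is to follow verbatim the strategy of Propositions \ref{r} and \ref{h}: exhibit an orthonormal frame adapted to $\pi$, read the integrability data off the Lie brackets, and then feed these into Theorem \ref{Th2} with $c=1$. The natural candidate frame on $(S^3\supset I\times S^1\times S^1,\,{\rm d}\rho^2+\cos^2\rho\,{\rm d}z^2+\sin^2\rho\,{\rm d}\theta^2)$ is
\begin{equation}\notag
e_1=\frac{\partial}{\partial\rho},\qquad e_2=\frac{1}{\cos\rho}\frac{\partial}{\partial z},\qquad e_3=\frac{1}{\sin\rho}\frac{\partial}{\partial\theta},
\end{equation}
which is manifestly orthonormal. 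Since $\pi(\rho,z,\theta)=(\rho,z)$ we have ${\rm d}\pi(e_3)=0$, while ${\rm d}\pi(e_1)=\varepsilon_1$ and ${\rm d}\pi(e_2)=\varepsilon_2$ for the orthonormal frame $\{\varepsilon_1=\frac{\partial}{\partial\rho},\,\varepsilon_2=\frac{1}{\cos\rho}\frac{\partial}{\partial z}\}$ on the base; hence $e_3$ is vertical and the frame is adapted to $\pi$.

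First I would compute the three Lie brackets. A direct calculation gives $[e_1,e_3]=-\cot\rho\,e_3$, $[e_2,e_3]=0$ and $[e_1,e_2]=\tan\rho\,e_2$, so comparison with (\ref{RC0}) yields the integrability data $f_1=\sigma=\kappa_2=0$, $f_2=\tan\rho$ and $\kappa_1=-\cot\rho$. As a consistency check one verifies that these satisfy (\ref{RSB2}) with $c=1$: indeed $\kappa_1 f_2=-1=-c$, $e_1(\kappa_1)=\csc^2\rho=\kappa_1^2+1$, and $K^N=e_1(f_2)-f_2^2=\sec^2\rho-\tan^2\rho=1=c$, confirming that we are exactly in the situation covered by Theorem \ref{Th2}.

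Next I would apply Theorem \ref{Th2}. Since we seek a solution depending only on $\rho$, the second equation $c\,e_2(f)=0$ of (\ref{th2}) holds automatically, because $e_2=\frac{1}{\cos\rho}\frac{\partial}{\partial z}$ annihilates every function of $\rho$. For functions $h=h(\rho)$ the Laplacian formula from (\ref{fR2}) collapses to $\Delta^M h=h''+(\cot\rho-\tan\rho)h'$, and using it on $\kappa_1=-\cot\rho$ I would compute the properness expression (the third line of (\ref{th2})) as
\begin{equation}\notag
-\Delta^{M}(\kappa_1)+\kappa_1\Big(-c+\frac{c^2}{\kappa_1^2}\Big)=\frac{(2\sin^2\rho+1)\cos\rho}{\sin^3\rho},
\end{equation}
which is strictly positive on $(0,\tfrac{\pi}{2})$; thus $\tau_2(\pi)\neq0$ and any solution $f$ is automatically proper. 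It then remains to unwind the first equation of (\ref{th2}). Writing $F=f\kappa_1=-f\cot\rho$ and applying the same Laplacian formula gives $\Delta^M F=F''+(\cot\rho-\tan\rho)F'$, so the first equation becomes $F''+(\cot\rho-\tan\rho)F'+F(1-\tan^2\rho)=0$; substituting $F=-f\cot\rho$ and collecting terms should convert this into precisely the ODE (\ref{S}).

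The hard part will be this final trigonometric bookkeeping. After substituting $F=-f\cot\rho$, the coefficient of $f''$ is $-\cot\rho$, and dividing through by it (equivalently multiplying by $-\tan\rho$) should turn the $f'$-coefficient into $-(\csc^2\rho+2)\tan\rho=\frac{2\cos 2\rho-4}{\sin 2\rho}$ and the $f$-coefficient into $\csc^2\rho+2=\frac{1+2\sin^2\rho}{\sin^2\rho}$, matching (\ref{S}) exactly. The identities $\csc^2\rho-\cot^2\rho=1$, $\sec^2\rho-\tan^2\rho=1$ and $\cot\rho\tan\rho=1$ are what make these collapses occur, and the only real subtlety is keeping the signs straight through the double-angle rewrites. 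Once (\ref{S}) is obtained, the proposition follows, since any positive solution $f=f(\rho)$ of (\ref{S}) then satisfies all three conditions of (\ref{th2}) and so gives a proper $f$-biharmonic Riemannian submersion.
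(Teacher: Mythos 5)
Your proposal is correct and follows essentially the same route as the paper: the same adapted frame, the same integrability data $f_1=\sigma=\kappa_2=0$, $\kappa_1=-\cot\rho$, $f_2=\tan\rho$, verification that the bitension field is nonzero (your expression $\frac{(2\sin^2\rho+1)\cos\rho}{\sin^3\rho}$ equals the paper's $\frac{1+\sin^2\rho\cos2\rho}{\sin^3\rho\cos\rho}$), and reduction of the first equation of (\ref{th2}) with $c=1$ and $f=f(\rho)$ to the ODE (\ref{S}). The only difference is that the paper additionally records a local-existence argument for a positive solution of (\ref{S}), which is not needed for the conditional statement as phrased.
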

\begin{proof}
One can easily check that the orthonormal frame  $\{e_1=\frac{\partial}{\partial\rho},\; e_2=\frac{1}{\cos \rho}\frac{\partial}{\partial z}, \;e_3=\frac{1}{\sin\rho}\frac{\partial}{\partial\theta}\}$ on $(I\times S^1\times S^1,{\rm d}\rho^2+\cos^2\rho{\rm d}z^2+\sin^2\rho{\rm d}\theta^2)$
is adapted to the Riemannian submersion  $\pi$ with
${\rm d}\pi(e_i ) = \varepsilon_i , i = 1,2$ and vertical $e_3$, where $\{\varepsilon_1=\frac{\partial}{\partial\rho},\; \varepsilon_2=\frac{1}{\cos\rho}\frac{\partial}{\partial z}\}$  forms an orthonormal
frame on the base space $(I\times S^1,{\rm d}\rho^2+\cos^2\rho{\rm d}z^2)$. A direct computation gives
\begin{equation}\label{pp0}
\begin{array}{lll}
[e_1, e_3] = -\cot\rho e_3, [e_2, e_3] = 0, [e_1, e_2] =\tan  \rho e_2,\\
f_1=\sigma=\kappa_2 = 0,\; \kappa_1=-\cot  \rho, f_2=\tan\rho.
\end{array}
\end{equation}
It is easy to check that $-\Delta^{M}(\kappa_1) +\kappa_1(-1+1/\kappa_1^2)=\frac{1+\sin^2\rho\cos2\rho}{\sin^3\rho\cos\rho}\neq0$. Substituting  (\ref{pp0}) and $c=1$ into  (\ref{th2}), then we have
 \begin{equation}\label{pp2}
\begin{array}{ccc}
-\Delta^{M}(-f\cot\rho) -f\cot\rho(-1+\tan^2\rho)=0\;{\rm and}\;
e_2(f)=0.
\end{array}
\end{equation}
If $f=f(\rho)$ depends on  only the variable $\rho$, then substituting this,  (\ref{RC0}) and (\ref{pp0}) into (\ref{pp2}) and simplifying the resulting equation we obtain (\ref{S}).
By the theory of $ODE$,  we conclude that there is  a local solution for  (\ref{S}). Moreover, if $y$  is a solution of  (\ref{S}) on  $I\subset(0,\frac{\pi}{2})$,  one finds $-y$ to be  a  solution of  (\ref{S}) on $I$. Hence, there is  a positive function  solution $f=f(\rho)$  on $I$ for  (\ref{S}). Thus,  we obtain the proposition.
\end{proof}

{\bf Acknowledgments:}
 Both authors would like to thank  Prof. Dr. Ye-Lin Ou for his guidance, help, and encouragement through many invaluable discussions, suggestions, and stimulating questions during the preparation of this work.  A part of the work was done when Ze-Ping Wang was a visiting scholar at Yunnan University in Fall 2023. He would like to express his gratitude to Professor Han-Chun Yang for his invitation and to Yunnan University for the hospitality.

\end{document}